\documentclass[11.5pt]{amsart}
\usepackage{geometry}
\usepackage{graphicx}
\usepackage[colorlinks=true, citecolor=blue, linkcolor=blue, urlcolor=blue]{hyperref}
\usepackage{nicefrac}
\newgeometry{asymmetric, centering, margin=1in}
\numberwithin{equation}{section}

%% numbering %%%%%%%%%
\newtheorem{lemma}{Lemma}

\newtheorem{theorem}{Theorem}

\newtheorem{definition}{Definition}
\theoremstyle{remark}

%% definitions %%%%%%%%% 

\def\R{\mathbb R}

\def\Z{\mathbb Z}
\def\N{\mathbb N}
\def\M{\mathcal M}

\def\d{\partial}
\newcommand{\pair}[1]{\left\langle #1 \right\rangle}
\newcommand{\norm}[1]{\left\lVert #1 \right\rVert}
\newcommand{\abs}[1]{\left\lvert #1 \right\rvert}

%% metadata %%%%%%%%%
\date{Compiled \today}
\title[H\"older Stable Recovery of Time-Dependent Electromagnetic Potentials]{H\"older Stable Recovery of Time-Dependent Electromagnetic Potentials Appearing in a Dynamical Anisotropic Schr\"odinger Equation}
\author[Y. Kian]{Yavar Kian}
\author[A. Tetlow]{Alexander Tetlow}

\begin{document}
\maketitle

\begin{abstract}
We consider the inverse problem of H\"oldder-stably determining the time- and space-dependent coefficients of the Schr\"odinger equation on a simple Riemannian manifold with boundary of dimension $n\geq2$ from knowledge of the Dirichlet-to-Neumann map. Assuming the divergence of the magnetic potential is known, we show that the electric and magnetic potentials can be H\"older-stably recovered from these data.  Here we also remove the  smallness assumption for the solenoidal part of the magnetic potential present in previous results.
\end{abstract}

\section{Introduction}

\subsection{Statement of the Problem} Let $T>0$, let $(\M,g)$ be a compact, connected, smooth Riemannian manifold of dimension $n\geq2$, and denote by $\d\M$ its boundary. Further assume that $(\M,g)$ is simple (see definition \ref{d1}). Let $A\in W^{2,\infty}((0,T)\times\M;T^\ast\M)$ be given by $A=\sum_{j=1}^na_jdx^j$, and consider the magnetic Laplacian given by\[\Delta_{g,A(t)}u=\sum_{j,k=1}^{n}\abs{g}^{-\frac{1}{2}}\big(\d_{x^j}+ia_j(t,x)\big)\Big(\abs{g}^{\frac{1}{2}}g^{jk}\big(\d_{x^k}+ia_k(t,x)\big)u\Big),\] where $g^{-1}=g^{ij}$ and $\abs{g}=\det(g)$. If $A=0$, this is just the usual Laplace-Beltrami operator $\Delta_g$. For $T>0$ and $q\in W^{1,\infty}((0,T)\times\M)$ we consider the initial boundary value problem (IBVP)\begin{equation}\begin{split}\label{1.1}i\d_tu(t,x)+\Delta_{g,A(t)}u(t,x)+q(t,x)u(t,x)&=0\textrm{ in }(0,T)\times\M,\\u(t,x)&=f\textrm{ on }(0,T)\times\d\M,\\u(0,x)&=0\textrm{ in }\M,\end{split}\end{equation} with inhomogeneous Dirichlet data $f$. For all $r,s\in(0,\infty)$ and $X=\M$ or $X=\d\M$ define the spaces $H^{r,s}((0,T)\times X)=H^r(0,T;L^2(X))\cap L^2(0,T;H^s(X))$ with the associated norm\[\norm{u}_{H^{r,s}((0,T)\times X)}^2=\norm{u}_{H^r(0,T;L^2(X))}^2+\norm{u}_{L^2(0,T;H^s(X))}^2.\]
We further define the space\[H^{r,s}_0((0,T)\times\d\M)=\Big\{f\in H^{r,s}\big((0,T)\times\d\M\big):\textrm{ for all }k\in\big(-1,s-\frac{1}{2}\big)\cap\N,\ \d^k_tf\vert_{t=0}=0\Big\}.\]

The problem (\ref{1.1}) admits a unique solution $u\in H^{1,2}((0,T)\times\M)$ for $f\in H^{\frac{9}{4},\frac{3}{2}}((0,T)\times\d\M)$ (see \cite[Proposition 2.1]{22}). Further, the Dirichlet-to-Neumann (D-to-N map in short) map\begin{equation}\label{1.2}\Lambda_{A,q}(f)=(\d_\nu+iA\nu)u,\textrm{ for }f\in H^{\frac{9}{4},\frac{3}{2}}((0,T)\times\d\M),\end{equation}where $\nu=\nu(x)$ denotes the unit outward normal to $\d\M$ with respect to the metric $g$, is a bounded operator from $H^{\frac{9}{4},\frac{3}{2}}_0((0,T)\times\d\M)$ to $L^2((0,T)\times\d\M)$. For $j=1,2$, let $A_j\in W^{2,\infty}((0,T)\times\M;T\ast\M)$, and $q_j\in W^{1,\infty}((0,T)\times\M)$. We call $(A_1,q_1)$ and $(A_2,q_2)$ gauge equivalent if there exists $\phi\in W^{3,\infty}((0,T)\times\M)$ such that $\phi\vert_{(0,T)\times\d\M}=0$, $A_2=A_1+d\phi$ and $q_2=q_1-\d_t\phi$ and let $u_j$ be the solution of (\ref{1.1}) with potentials $A=A_j$ and $q=q_j$. If $\phi$ is as above, we recall that the D-to-N map is invariant under this gauge transformation. More precisely, we have\[(i\d_t+\Delta_{g,A_1(t)}+q_1)e^{i\phi}u_2(x,t)=e^{i\phi}(i\d_t+\Delta_{g,A_2(t)}+q_2)u_2(x,t)=0,\]and we deduce that $e^{i\phi}u_2=u_1$ and\[(\d_\nu+iA_1\nu)u_1=(\d_\nu+i(A_1+d\phi)\nu)u_2=(\d_\nu+iA_2\nu)u_2,\]which then implies that $\Lambda_{A_1,q_1}=\Lambda_{A_2,q_2}$. This obstruction to uniqueness notwithstanding, the aim of this paper is to prove H\"older-stable recovery of the time-dependent electric and magnetic potentials $(A,q)$ from knowledge of the D-to-N map $\Lambda_{A,q}$.

\subsection{History of the Problem}In the case of the dynamic Schr\"odinger equation with time-independent potentials, H\"older-stable recovery of the magnetic field from knowledge of the Dirichlet-to-Neumann map was shown in \cite{3}, and stable recovery of the electric potential of the Schr\"odinger equation on a Riemannian manifold was proved in \cite{4}. This latter result is extended to stable determination of the electromagnetic potentials on a Riemannian manifold from the D-to-N map in \cite{2}. We mention also the recent work of \cite{BKS}, where such results have been extended to unbounded cylindrical domain.

Literature dealing with the inverse problem of recovering time-dependent potentials of the Schr\"odinger equation is rather sparse. To the best of the authors’ knowledge, the only results establishing recovery of time-dependent potentials of the Schr\"odinger equation from the D-to-N map deal with Euclidean domains. In particular, it was proved in \cite{11} that the time-dependent electric and magnetic potentials are uniquely determined by the D-to-N map. Logarithmic-stable determination was shown for the electric potential in \cite{9}. This result was extended to the full electromagnetic potential  in \cite{7}, provided that the time-independent part of the magnetic potential is sufficiently small. Indeed, it was only recently shown in \cite{22} that the electromagnetic potential in a Euclidean domain can be H\"older-stably recovered from knowledge of the D-to-N map.

In the current work, we show that it is possible to H\"older-stably recover the time-and-space-dependent coefficients of the dynamic Schr\"odinger equation on a simple Riemannian manifold.

\subsection{Main Results} Here and in the rest of this paper we write $\norm{\cdot}$ for the norm of an operator in $\mathcal{B}\big(H^{\frac{9}{4},\frac{3}{2}}_0((0,T)\times\d\M),L^2((0,T)\times\d\M)\big)$. In this paper we aim to prove the following:

\begin{theorem}\label{T1}
(Uniqueness):For $j=1,2$, let $A_j\in W^{6,\infty}((0,T)\times\M;T^\ast\M)$ and $q_j\in W^{4,\infty}((0,T)\times\M)$. Assume also that\begin{equation}\label{1.3}\d^\alpha_xA_1(t,x)=\d^\alpha_xA_2(t,x),\quad(t,x)\in(0,T)\times\d\M,\quad\alpha\in\N^n,\abs{\alpha}\leq5.\end{equation}
Then the condition $\Lambda_{A_1,q_1}=\Lambda_{A_2,q_2}$ implies that $(A_1,q_1)$ and $(A_2,q_2)$ are gauge equivalent.
\end{theorem}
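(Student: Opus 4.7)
The natural approach is a geometric-optics / Gaussian-beam argument on the two Schr\"odinger equations, combined with injectivity of the geodesic X-ray transform on the simple manifold $(\M,g)$. For every maximal unit-speed geodesic $\gamma:[0,\ell]\to\M$ with endpoints on $\d\M$, every admissible time shift $t_0$, and every large parameter $\rho\gg1$, I would construct approximate solutions
\[
u^{(j)}_\rho(t,x) = a^{(j)}(t,x)\,e^{i\rho\theta(t,x)} + r^{(j)}_\rho(t,x),\qquad j=1,2,
\]
to $(i\d_t+\Delta_{g,A_j}+q_j)u^{(j)}_\rho = 0$, where the complex phase $\theta$ solves the eikonal equation $\d_t\theta + |\nabla_g\theta|_g^2 = 0$ with $\mathrm{Im}\,\theta\ge 0$ vanishing to second order on the Schr\"odinger bicharacteristic $\Gamma_{t_0}$ projecting onto $\gamma$; the amplitudes $a^{(j)}$ solve transport equations along $\Gamma_{t_0}$ that encode $(A_j,q_j)$; and the remainder satisfies $\|r^{(j)}_\rho\|_{H^{1,2}}=o(\rho^{-1})$. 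The $W^{6,\infty}$ and $W^{4,\infty}$ regularity of the coefficients is what permits the ansatz to be carried to enough orders. A backward-in-time variant yields a test function $v_\rho$, concentrated on the same $\Gamma_{t_0}$, that solves the adjoint equation for $(A_1,q_1)$.

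Letting $u_1$ be the exact solution of (\ref{1.1}) with data $f=u^{(2)}_\rho|_{(0,T)\times\d\M}$, the identity $\Lambda_{A_1,q_1}=\Lambda_{A_2,q_2}$, together with (\ref{1.3}) (which in particular gives $A_1=A_2$ on $(0,T)\times\d\M$), yields via Green's identity the integral relation
\[
\int_0^T\!\!\int_\M\Big[2i\pair{A_1-A_2,\nabla_g u^{(2)}_\rho}_g + \bigl(q_1-q_2 + i\,\mathrm{div}_g(A_1-A_2) + |A_1|^2_g-|A_2|^2_g\bigr)u^{(2)}_\rho\Big]\overline{v_\rho}\,dV_g\,dt = o(1).
\]
The leading $O(\rho)$ contribution comes from $\pair{A_1-A_2,\nabla_g u^{(2)}_\rho}_g$, and the localisation of $u^{(2)}_\rho\overline{v_\rho}$ onto $\Gamma_{t_0}$ gives, in the limit $\rho\to\infty$,
\[
\int_0^\ell (A_1-A_2)(t_0+s,\gamma(s))\bigl(\dot\gamma(s)\bigr)\,ds = 0
\]
for every admissible pair $(t_0,\gamma)$. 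Fourier-transforming in $t_0$ converts this into a vanishing attenuated geodesic X-ray transform of $1$-forms on $(\M,g)$ at each time-frequency $\tau$; the solenoidal injectivity of this transform on simple manifolds, together with the boundary vanishing (\ref{1.3}), produces $\phi\in W^{3,\infty}((0,T)\times\M)$ with $\phi|_{(0,T)\times\d\M}=0$ and $A_2=A_1+d\phi$.

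After gauging by $\phi$, one is reduced to showing that $(A_1,q_1)$ and $(A_1,q_2-\d_t\phi)$ (which share a D-to-N map by gauge invariance) satisfy $q_2-\d_t\phi = q_1$. Repeating the GO argument with equal magnetic potentials kills the gradient term in the integral identity and leaves $\int_0^\ell (q_1-q_2+\d_t\phi)(t_0+s,\gamma(s))\,ds = 0$ for all admissible $(t_0,\gamma)$; scalar injectivity of the geodesic ray transform on $(\M,g)$ (Mukhometov--Sharafutdinov) then forces $q_2 = q_1-\d_t\phi$, closing the gauge equivalence. The main obstacle, and the novelty relative to prior work, is handling the quadratic term $|A_1|^2_g-|A_2|^2_g$ in the integral identity without a smallness hypothesis on the solenoidal part of $A_1-A_2$: my plan is to absorb it into higher-order corrections of the transport equations satisfied by the $a^{(j)}$, which requires pushing the WKB ansatz several orders beyond the leading term and is precisely why the strong $W^{6,\infty}$ assumption on the magnetic potentials is imposed. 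A secondary technical point is the deduction of spatial injectivity of the ray transform from its spacetime version; this is effected by Fourier transform in $t_0$ and by appeal to the known injectivity of the attenuated geodesic X-ray transform on simple manifolds.
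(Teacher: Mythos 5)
Your overall architecture (geometric optics solutions, an integral identity from $\Lambda_{A_1,q_1}=\Lambda_{A_2,q_2}$, then injectivity of geodesic ray transforms) matches the paper's, but two steps as you describe them would not go through. First, the transport equations for the amplitudes necessarily force $a_j$ to carry the factor $\exp\big(i\int_0^{\infty}A_j(t,r+s,\theta)\theta\,ds\big)$, so the product $a_1\overline{a_2}$ contains $\exp\big(i\int (A_1-A_2)\theta\big)$ and the quantity that vanishes in the limit is not the linear transform of $A=A_1-A_2$ but rather $e^{iI_1[A(t,\cdot)](y,\theta)}-1$. Your proposal jumps directly to $\int A\,\dot\gamma\,ds=0$, silently linearizing this exponential; that linearization is exactly where earlier works needed the smallness hypothesis. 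The paper instead recognizes $i(A\theta)e^{i\int_r^\infty A\theta}$ as an exact $r$-derivative, concludes $e^{iI_1[A(t,\cdot)]}\equiv 1$, hence $I_1[A(t,\cdot)]\in 2\pi\Z$, and then uses continuity of $I_1A$, connectedness of $\d_+S_y\M_1$, and the vanishing of $I_1A$ on short geodesics near $\d\M_1$ (where $A$ extends by zero, thanks to (\ref{1.3})) to pin down $I_1[A(t,\cdot)]=0$. You have misidentified the obstacle: the quadratic term $\abs{A_1}_g^2-\abs{A_2}_g^2$ is harmless, being $O(1)$ in $\lambda$ against the $O(\lambda)$ gradient term, and disappears upon dividing by $\lambda$; no higher-order WKB corrections are needed for it, and "absorbing it into the transport equations" does not address the genuine nonlinearity sitting in the amplitudes.

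Second, the limit you write down, $\int_0^\ell(A_1-A_2)(t_0+s,\gamma(s))\dot\gamma(s)\,ds=0$, is a light-ray transform appropriate to the wave equation, not the Schr\"odinger equation. With the Schr\"odinger phase $e^{i\lambda(\psi(x)-\lambda t)}$ the product $u_1\overline{u_2}$ is non-oscillatory, and localization in $t$ via the test function yields the purely spatial transform $I_1[A(t,\cdot)](y,\theta)=0$ for each fixed $t$; time enters only as a parameter. Consequently the Fourier transform in $t_0$ and the appeal to injectivity of an \emph{attenuated} geodesic X-ray transform are both unnecessary and, in dimension $n\geq 3$ on a general simple manifold, not something you can take off the shelf. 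Ordinary s-injectivity of $I_1$ (Anikonov--Romanov, Stefanov--Uhlmann) at each fixed time suffices, and the same remark applies to your treatment of $q_1-q_2+\d_t\phi$ via $I_0$. The gauge-reduction step at the end of your argument is essentially the paper's, and is fine.
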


\begin{theorem}\label{T2}
(Stable Recovery of the Magnetic Potential): Let the condition of Theorem \ref{T1} be fulfilled and, for $j=1,2$,  let $A_j\in W^{6,\infty}((0,T)\times\M;T^\ast\M)\cap H^{3n+4}((0,T)\times\M;T^\ast\M)$ be such that 
\begin{equation}\label{1.333}\d^\alpha_xA_1(t,x)=\d^\alpha_xA_2(t,x),\quad(t,x)\in(0,T)\times\d\M,\quad\alpha\in\N^n,\abs{\alpha}\leq 3n+3.\end{equation}
Assume also that there exists a constant $B$ such that \begin{equation}\label{C1}\sum_{j=1,2}\norm{q_j}_{W^{5,\infty}((0,T)\times\M;T^\ast\M)}+\norm{A_j}_{W^{5,\infty}((0,T)\times\M;T^\ast\M)}+\norm{A_j}_{H^{3n+4}((0,T)\times\M;T^\ast\M)}\leq B.\end{equation}

Then we have \[\norm{A_1^{sol}-A_2^{sol}}\leq C\norm{\Lambda_{A_1,q_1}-\Lambda_{A_2,q_2}}^{s_1},\]where $s_1>0$ is a general  constant, $C>0$ a constant depending only on $B$, $T$, $\M$ and $A_j^{sol}$ is the solenoidal part of the Hodge decomposition of $A_j$, given in Lemma \ref{L1}.
\end{theorem}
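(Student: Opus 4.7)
The plan is to implement the geometric optics (GO) approach for inverse problems on simple Riemannian manifolds, carried out for the dynamic Schrödinger equation as in the Euclidean treatment \cite{22} and adapted with the X-ray transform tools from \cite{2}. First I would invoke gauge invariance: by Lemma \ref{L1} and the boundary identification (\ref{1.333}), I would replace $A_2$ by $A_2 + d\phi$ for a suitable $\phi$ vanishing to high order on $\partial\M$, reducing the problem to controlling $A_1^{sol} - A_2^{sol}$ directly. After extending $A_j$ and $q_j$ to a slightly larger simple manifold $(\widetilde\M, g)$, I would fix a point $p \in \widetilde\M \setminus \M$, set $\psi(x) = d_g(p,x)$ (smooth on $\M$ and solving $|d\psi|_g^2 = 1$), and construct GO solutions of (\ref{1.1}) of the form
\[
u_{j,\lambda}(t,x) = e^{i\lambda \psi(x) - i\lambda^2 t}\, a_j(t, x) + r_{j,\lambda}(t,x),\qquad j = 1,2,
\]
where the amplitudes $a_j$ solve transport equations along the geodesics issuing from $p$ and carry the factor $\exp\!\big(-i \int_0^{s} A_j(t, \gamma(\sigma)) \cdot \dot\gamma(\sigma)\,d\sigma\big)$ encoding the magnetic potential. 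The remainders should satisfy $\|r_{j,\lambda}\|_{H^{1,2}} = O(\lambda^{-\mu})$, with $\mu > 0$ calibrated by the $H^{3n+4}$ regularity of $A_j$.

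\medskip

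Next I would derive an Alessandrini-type identity
\[
\left\langle (\Lambda_{A_1,q_1} - \Lambda_{A_2,q_2}) f_1, f_2\right\rangle = \int_0^T\!\int_\M \Big[ 2i (A_1 - A_2)\cdot\nabla u_1 + Q\, u_1 \Big]\,\overline{v_2}\, dV_g\, dt,
\]
where $u_1$ is the forward solution for $(A_1,q_1)$ with boundary trace $f_1$, $v_2$ is the backward adjoint solution for $(A_2,q_2)$ with boundary trace $f_2$, and $Q$ collects the zeroth-order contributions in $q_1 - q_2$, $\mathrm{div}_g(A_1 - A_2)$, and $|A_1|^2 - |A_2|^2$. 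Substituting the two GO ansatzes so that the fast phases cancel between $u_1$ and $\overline{v_2}$, the dominant contribution as $\lambda \to \infty$ (after dividing by $\lambda$) should reduce to an integral of $(A_1 - A_2)(t,x) \cdot \nabla\psi(x)$ weighted by $a_1 \overline{a_2}$. Localizing the amplitudes about a single geodesic $\gamma$ through $p$ and varying $p$, one recovers the geodesic X-ray transform $I_1[(A_1 - A_2)(t,\cdot)](\gamma)$ for all geodesics with endpoints on $\partial\M$ and a.e.\ $t \in (0,T)$, up to an error of the form $C(\lambda^{N_1} \|\Lambda_{A_1,q_1} - \Lambda_{A_2,q_2}\| + \lambda^{-N_2})$ for some $N_1, N_2 > 0$.

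\medskip

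Finally, since $(\M, g)$ is simple, the solenoidal-injectivity and stability estimate for $I_1$ on $1$-forms (Sharafutdinov, and in sharper form Stefanov--Uhlmann and Paternain--Salo--Uhlmann) yields $\|f^{sol}\|_{L^2(\M)} \leq C\|I_1 f\|_{H^1}$. Applying this pointwise in $t$ to $f = (A_1 - A_2)(t,\cdot)$, integrating in $t \in (0,T)$, and optimizing the free parameter $\lambda$ against $\|\Lambda_{A_1,q_1} - \Lambda_{A_2,q_2}\|$ should produce the claimed Hölder estimate with some exponent $s_1 = N_2/(N_1 + N_2) > 0$. The main obstacle will be the quantitative control of the GO remainders: the decay $O(\lambda^{-\mu})$ must be strong enough to survive both the loss of derivatives in the X-ray stability and the Sobolev interpolation needed to pass from time-integrated to pointwise-in-$t$ data. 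It is precisely this uniform remainder control, enabled by the strong $H^{3n+4}$ hypothesis on $A_j$, that should replace the Neumann-series/smallness argument of the earlier Euclidean result \cite{7} and thereby remove the smallness assumption on the solenoidal part of $A$.
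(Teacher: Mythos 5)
Your overall framework (GO solutions with eikonal phase $\psi=d_g(p,\cdot)$, an Alessandrini-type identity, and s-injectivity/stability of $I_1$ on a simple manifold) matches the paper's, but there is a genuine gap at the central step --- and it is exactly where the smallness assumption of the earlier literature lives. Because the transport equations force the amplitudes to carry the phases $\exp\big(i\int_0^\infty \tilde A_j(t,r+s,\theta)\theta\,ds\big)$, the product $a_1\overline{a_2}$ is weighted by $\exp\big(i\int_0^\infty A(t,r+s,\theta)\theta\,ds\big)$ with $A=A_1-A_2$, so what the identity stably determines is not the linear transform $I_1[A(t,\cdot)]$ but $\int_0^\infty i(A\theta)\exp\big(i\int_r^\infty A\theta\,ds\big)dr=e^{iI_1[A(t,\cdot)](y,\theta)}-1$. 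Your assertion that the dominant contribution ``reduces to'' an expression from which one ``recovers $I_1[(A_1-A_2)(t,\cdot)]$'' skips this nonlinearity. The paper handles it by Taylor expansion, $e^{iI_1A}-1=iI_1A+O\big((I_1A)^2\big)$, which produces a quadratic error controlled by $\norm{A^{sol}}^2_{\mathcal{C}^0}$; this term can only be absorbed into the left-hand side if $\norm{A^{sol}}_{L^2}$ is a priori small. (The $H^{3n+4}$ hypothesis enters here, via Sobolev embedding and interpolation giving $\norm{A^{sol}}_{\mathcal{C}^0}\leq C\norm{A^{sol}}_{L^2}^{5/6}$, not via sharper remainder estimates.)

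Consequently, your closing claim that the smallness assumption is removed by ``uniform remainder control enabled by the $H^{3n+4}$ hypothesis'' is not the actual mechanism and would not work: no rate of decay of $r_{j,\lambda}$ eliminates the quadratic term coming from the exponential weight in $a_1\overline{a_2}$. The paper's device is a separate qualitative-to-quantitative step: taking $h$ to be a mollified Dirac mass, it derives the pointwise bound $\abs{e^{iI_1[A(t_0,\cdot)](y,\theta_0)}-1}\leq C\gamma^{1/(6n+69)}$ with $\gamma=\norm{\Lambda_{A_1,q_1}-\Lambda_{A_2,q_2}}$, which a priori only pins $I_1[A(t_0,\cdot)]$ near $2\pi\Z$; it then uses that $A$ vanishes on $\M_1\setminus\M$ (so $I_1A=0$ on short geodesics near $\d\M_1$) together with continuity of $I_1A$ and connectedness of $\d_+S_y\M_1$ to select the branch $I_1A\approx0$, hence $\norm{A^{sol}}_{L^2}\leq\varepsilon$ whenever $\gamma$ is small. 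Only then can the first argument be rerun without hypotheses. Without this step (or an equivalent one), your proof establishes the theorem only under the smallness condition that the theorem is meant to dispense with.
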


\begin{theorem}\label{T3}
(Stable Recovery of the Electric Potential): Let the condition of Theorem \ref{T2} be fulfilled with 
\begin{equation}\label{t1.6}\delta A_1=\delta A_2.\end{equation}
Fix also $q_j\in W^{4,\infty}((0,T)\times\M)\cap H^5((0,T)\times\M)$ and assume that the condition \begin{equation}\label{1.4}\d^\alpha_xq_1(t,x)=\d^\alpha_xq_2(t,x),\quad(t,x)\in(0,T)\times\d\M,\quad\alpha\in\N^n,\ \abs{\alpha}\leq4,\end{equation} is fulfilled. We also assume that there exists a constant $B_1>0$ such that\begin{equation}\label{1.5}\sum_{j=1,2}\big(\norm{q_j}_{W^{4,\infty}((0,T)\times\M)}+\norm{q_j}_{H^5((0,T)\times\M)}\big)\leq B_1.\end{equation}

Then we have \begin{equation}\label{1.6}\norm{q_1-q_2}_{L^2((0,T)\times\M)}\leq C\norm{\Lambda_{A_1,q_1}-\Lambda_{A_2,q_2}}^{s_2},\end{equation} where $C$ depends only on $B$, $B_1$ $T$, and $\M$, and $s_2$ is a general constant.
\end{theorem}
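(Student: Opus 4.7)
The plan is to follow the by-now-classical scheme for Schr\"odinger inverse problems: derive a Green-type integral identity from the difference of the two D-to-N maps, substitute geometric-optics (GO) solutions concentrating on the geodesics of $(\M,g)$, pass to the high-frequency limit to reduce to a geodesic X-ray transform, and conclude by the stability of this transform on simple manifolds combined with a Sobolev interpolation. The novelty relative to establishing stability of $q$ in isolation is that Theorem \ref{T2} already provides control on $A_1-A_2$, so the reduction can cleanly isolate $q_1-q_2$.

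For admissible boundary data $f$, let $u_1$ solve (\ref{1.1}) with $(A_1,q_1)$ and let $v_2$ solve the adjoint equation with $(A_2,q_2)$, vanishing at $t=T$. Standard integration by parts yields an identity
\begin{equation*}
\int_0^T\!\!\int_\M \bigl[(q_1-q_2)+\mathcal{P}_{A_1-A_2}\bigr]u_1\,\overline{v_2}\,dV_g\,dt = \bigl\langle (\Lambda_{A_1,q_1}-\Lambda_{A_2,q_2})f,\,(\d_\nu+iA_2\nu)\overline{v_2}\bigr\rangle,
\end{equation*}
where $\mathcal{P}_A$ is a first-order differential operator whose coefficients are linear in $A$ and $\delta A$. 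Since $\delta A_1=\delta A_2$ by (\ref{t1.6}) and the boundary derivatives of $A_j$ agree through order $3n+3$ by (\ref{1.333}), the Hodge decomposition of Lemma \ref{L1} forces $A_1-A_2=A_1^{sol}-A_2^{sol}$, so Theorem \ref{T2} bounds the $\mathcal{P}_{A_1-A_2}$-contribution by $C\,\norm{u_1}\,\norm{v_2}\,\norm{\Lambda_{A_1,q_1}-\Lambda_{A_2,q_2}}^{s_1}$.

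Next, I would construct GO solutions in the spirit of those built in \cite{22} for the Euclidean case and adapted to $(\M,g)$: simplicity permits embedding $\M$ into a larger simple manifold and taking $\varphi$ to be the distance to an exterior point, giving a smooth eikonal phase on $\M$ with $\abs{d\varphi}_g=1$. For parameters $\tau\to\infty$ and an auxiliary frequency $\sigma\in\R$ encoding the Fourier transform in time, I would produce WKB-type solutions $u_{1,\tau,\sigma}$ of (\ref{1.1}) and $v_{2,\tau,\sigma}$ of the adjoint equation with principal phases concentrating on the unit-speed geodesic $\gamma$ generated by $\nabla\varphi$ and amplitudes solving the associated magnetic transport equations; the remainders are controlled uniformly in $(\tau,\sigma)$ by standard energy estimates at the cost of a fixed number of derivatives of $A_j,q_j$, precisely those furnished by (\ref{C1})--(\ref{1.5}). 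Substituting into the identity above, the leading order in $\tau$ combined with Plancherel in $\sigma$ yields
\begin{equation*}
\bigl\|I_\gamma(q_1-q_2)\bigr\|_{L^2((0,T)\times\Gamma)} \leq C\bigl(\tau^{-\kappa_1}+\tau^{\kappa_2}\norm{\Lambda_{A_1,q_1}-\Lambda_{A_2,q_2}}^{s_1}\bigr),
\end{equation*}
where $I_\gamma$ denotes the geodesic X-ray transform on $(\M,g)$ applied to $q_1-q_2$ in the $x$ variable for each fixed $t$, and $\Gamma$ parametrizes the admissible geodesics. The Mukhometov-type stability of $I_\gamma$ on the simple manifold $(\M,g)$ then converts this into a Sobolev estimate on $q_1-q_2$, and interpolating against the a priori $H^5$-bound in (\ref{1.5}) and optimizing in $\tau$ produces the H\"older estimate (\ref{1.6}).

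The main obstacle will be the GO construction itself: controlling the remainder for the time-dependent magnetic Schr\"odinger equation on a curved manifold, uniformly in both $\tau$ and $\sigma$, is exactly what dictates the Sobolev exponents in (\ref{C1})--(\ref{1.5}) and the boundary matching (\ref{1.333}) required throughout. A subsidiary but essential point is that the product of principal amplitudes reduces to a $q$-independent weight along $\gamma$ only thanks to $\delta A_1=\delta A_2$; without this cancellation, the leading term would couple $q_1-q_2$ to components of $A_1-A_2$ in a way that would prevent a direct reduction to the X-ray transform of the scalar $q_1-q_2$.
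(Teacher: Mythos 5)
Your plan coincides with the paper's proof in all essentials: the same Green-type identity obtained by multiplying the equation for $w=v-u_1$ by $\overline{u_2}$, the same geometric optics solutions with phase $e^{i\lambda(\psi(x)-\lambda t)}$ built from the distance function on an extended simple manifold $\M_1$, the same use of Theorem 2 (via interpolation, to bound $\norm{A}_{L^\infty}$ and strip the weight $\exp(i\int A\theta\,ds)$ from the amplitude product), reduction to the geodesic ray transform $I_0[q(t,\cdot)]$, and finally the Stefanov--Uhlmann stability of $I_0$ combined with interpolation against the a priori bounds and optimization in the small parameters. The only cosmetic deviation is that the paper never Fourier-transforms in time: it keeps $t$ as a parameter in the amplitude $h(t,\theta)$ and tests against $h=I_0I_0^\ast I_0[q(t,\cdot)]$, recovering $q$ slice by slice rather than through Plancherel in a dual time frequency.
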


As far as the authors are aware, the present work is the first dealing with recovery of time-dependent potentials appearing in a Schr\"odinger equation with variable coefficients of order two. In fact, the above estimates are the first showing H\"older-stable  recovery of a coefficient dependent on all variables of a second order partial differential equation with variable coefficients of order two. The only other work where similar results have been obtained is \cite{22}, where the authors consider the case of a bounded subset of $\R^n$ with the Euclidean metric.

Furthermore, stable recovery of a magnetic potential appearing in a Schr\"odinger equation on a manifold with non-Euclidean metric has, thus far, relied upon the a priori assumption that the magnetic potential is small in some appropriate norm, even in the time-independent case (see, for example, \cite{2}). This smallness assumption is also utilized when recovering the magnetic potential of the wave equation (as seen in \cite{34}). In fact, it happens that this assumption is not necessary when dealing with the Schr\"odinger equation, even when the magnetic potential is allowed to depend on time, as we shall demonstrate herein.

In Section \ref{section2}, we introduce the geodesic ray-transforms for $1$-forms and for functions. In Section \ref{section3} we construct geometric optics solutions to the equation (\ref{1.1}). We devote Section \ref{section4} to the proof of Theorem \ref{T1}, using the geometric optics solutions as the main tool. The estimate of Theorem \ref{T2} is proved in Section \ref{section5}, whereas the estimate of Theorem \ref{T3} is proved in Section \ref{section6}.

\section{Notations}\label{section2}
In this section, we list some notation used in the rest of the paper. We denote by $\pair{\cdot,\cdot}_g$ the inner product with respect to $g$ on $T\M$, that is for $x\in\M$ and $Y,Z\in T_x\M$ given by $Y=\sum_{j=1}^ny_j\d_{x^j}$, $Z=\sum_{j=1}^nz_j\d_{x^j}$ we have\[\pair{Y,Z}_{g(x)}=\sum_{j,k=1}^ng_{jk}(x)y_jz_k.\]

Similarly, we denote by $\pair{\cdot,\cdot}_g$ the inner product with respect to $g$ on $T^\ast\M$, that is for $U,V\in T_x^\ast\M$ given by $U=\sum_{j=1}^nu_jdx^j$, $V=\sum_{j=1}^nv_jdx^j$ we have\[\pair{U,V}_g(x)=\sum_{j,k=1}^ng^{jk}(x)u_jv_k.\]

We denote by $dV_g$ the Riemannian volume on $\M$, which is given in local coordinates by ${dV_g=\abs{g}^{\frac{1}{2}}dx^1\wedge\cdots\wedge dx^n}$. We further define on $\d\M$ the surface measure $\sigma_g$ such that for $X\in H^1(\M;T\M)$ we have \[\int_\M\textrm{div}_g(X)dV_g=\int_{\d\M}\pair{X,\nu}_gd\sigma_g,\]where $\textrm{div}_g(X)=\sum_{j=1}^n\abs{g}^{-\frac{1}{2}}\d_{x_j}\big(\abs{g}^{\frac{1}{2}}X^j\big)$. Additionally, we recall the Riemannian gradient operator given by $\nabla_gf=\big(g^{j1}\d_{x_j}f,\cdots,g^{jn}\d_{x_j}f\big)$.

We recall the coderivative operator $\delta$ is the operator sending the 1-form $\omega=\sum_{i=1}^n\omega_idx^i\in W^{1,\infty}(\M;T^\ast\M)$ to the function $\delta\omega$ given in local coordinates by\begin{equation}\label{coderivative}\delta\omega=\abs{g}^{-\frac{1}{2}}\sum_{j,k=1}^n\d_{x^j}\big(\abs{g}^{\frac{1}{2}}g^{jk}\omega_k\big).\end{equation}

We recall also the definition of a simple manifold. Let $D$ be the Levi-Civita connection on $(\M,g)$. For $x\in\d\M$ we consider the second quadratic form of the boundary\[\Pi(\theta,\theta)=\pair{D_\theta\nu,\theta}_{g(x)},\ \theta\in T_x\d\M.\]

We say that $\d\M$ is strictly convex if the form $\Pi$ is positive-definite for every $x\in\d\M$.

\begin{definition}\label{d1}We say that $(\M,g)$ is simple if $\d\M$ is strictly convex, $\M$ is simply connected, and for any $x\in\M$ the exponential map $\exp_x:\exp_x^{-1}(\M)\rightarrow\M$ is a diffeomorphism.\end{definition}

We write $\gamma_{x,\theta}$ for the unique geodesic in $\M$ with initial point $x\in\M$ and initial direction $\theta\in T_x\M$. We define the sphere bundle of $\M$ by\[S\M=\{(x,\theta)\in T\M:\abs{\theta}_g=1\},\] and likewise the submanifold of inner vectors $\d_+S\M$ by\[\d_+S\M=\{(x,\theta)\in S\M,\ x\in\d\M,\ \pair{\theta,\nu(x)}_g(x)<0\}.\]

Given that $\M$ is assumed to be simple, we can also define $\tau_+(x,\theta)$ to be the maximal time of existence in $M$ of the geodesic $\gamma_{x,\theta}$ for $x\in\d\M$, that is\[\tau_+(x,\theta)=\min\{s>0:\gamma_{x,\theta}(s)\in\d\M\}\textrm{ for }(x,\theta\in\d_+S\M).\]

We also introduce here the geodesic ray transforms on a simple Riemannian manifold $\M$.
\begin{definition}\label{d2}The geodesic ray transform for $1$-forms is the linear operator $I_1:\mathcal{C}^\infty(\M;T^\ast\M)\rightarrow\mathcal{C}^\infty(\d_+S\M)$ which is defined by\begin{equation}\label{I1}I_1\omega(x,\theta)=\int_0^{\tau_+(x,\theta)}\omega(\gamma_{x,\theta}(s))\gamma'_{x,\theta}(s)ds,\quad(x,\theta)\in\d_+S\M,\ \omega\in\mathcal{C}^\infty(\M;T^\ast\M).\end{equation}\end{definition}

\begin{definition}\label{d3}The geodesic ray transform for functions is the linear operator $I_0:\mathcal{C}^\infty(\M)\rightarrow\mathcal{C}^\infty(\d_+S\M)$ which is given by\begin{equation}\label{I0}I_0f(x,\theta)=\int_0^{\tau_+(x,\theta)}f(\gamma_{x,\theta}(s))ds,\quad(x,\theta)\in\d_+S\M,\ f\in\mathcal{C}^\infty(\M).\end{equation}\end{definition}

\section{Geometric Optics Solutions}\label{section3}
We now seek to construct GO solutions of the magnetic Schr\"odinger equation in $(0,T)\times\M$. We fix $A_j\in W^{6,\infty}((0,T)\times\M;T^\ast\M)$, $q_j\in W^{4,\infty}((0,T)\times\M)$ and assume that\begin{equation}\label{2.1}\d_x^\alpha A_1(t,x)=\d_x^\alpha A_2(t,x),\quad(t,x)\in(0,T)\times\d\M,\quad\alpha\in\N^n,\ \abs{\alpha}\leq5.\end{equation}

We consider the equations
\begin{equation}\begin{split}\label{2.2}i\d_tu_j+\Delta_{g,A_j(t)}u_j+q_ju_j&=0\textrm{ in }(0,T)\times\M,\\u_1(0,\cdot)=u_2(T,\cdot)&=0\textrm{ in }\M.\end{split}\end{equation}

We seek to find, for $\lambda>1$, $j=1,2$, solutions $u_j\in H^{1,2}((0,T)\times\M)$ of (\ref{2.2}) of the form\begin{equation}\label{2.4}u_j(t,x)=\Big(a_j(t,x)+\frac{b_j(t,x)}{\lambda}\Big)e^{i\lambda(\psi(x)-\lambda t)}+R_{j,\lambda}(t,x).\end{equation}

In (\ref{2.4}) above, $\psi,a_j,b_j$ satisfy the following eikonal and transport equations:
\begin{equation}\label{2.5}\abs{\nabla_g\psi}_g^2=1,\end{equation}
\begin{equation}\label{2.6}2i\pair{\nabla_g\psi,\nabla_ga_j}_g+i(\Delta_g\psi)a_j-2(A_j\nabla_g\psi)a_j=0,\end{equation}
\begin{equation}\label{2.7}2i\pair{\nabla_g\psi,\nabla_ga_j}_g+i(\Delta_g\psi)b_j-2(A_j\nabla_g\psi)b_j=-(i\d_t+\Delta_{g,A_j(t)}+q_j)a_j.\end{equation}

Taken together, equations (\ref{2.5}) - (\ref{2.7}) yield\[(i\d_t+\Delta_{g,A(t)}+q_j)\Big[e^{i\lambda(\psi(x)-\lambda t)}\Big(a_j(t,x)+\frac{b_j(t,x)}{\lambda}\Big)\Big]=e^{i\lambda(\psi(x)-\lambda t)}\frac{(i\d_t+\Delta_{g,A(t)}+q_j)b_j(t,x)}{\lambda}.\]

We also assume that there exists $\tau\in\big(0,\frac{T}{4})$ such that $a_j,b_j$ are supported in $[\tau,T-\tau]\times\M$ and further assume that $a_j,b_j\in H^3((0,T)\times\M)$, whence $(i\d_t+\Delta_{g,A_j(t)}+q_j)b_j\in H^1(0,T;L^2(\M))$. Thus we can choose $R_{j,\lambda}$ solving\begin{align}\begin{aligned}\label{2.9}(i\d_t+\Delta_{g,A_j(t)}+q_j)R_{j,\lambda}&=-e^{i\lambda(\psi(x)-\lambda t)}\frac{(i\d_t+\Delta_{g,A_j(t)}+q_j)b_j}{\lambda}&&\textrm{ in }(0,T)\times\M,\\R_{1,\lambda}(0,\cdot)&=R_{2,\lambda}(T,\cdot)=0&&\textrm{ in }\M,\\R_{j,\lambda}(t,x)&=0&&\textrm{ on }(0,T)\times\d\M.\end{aligned}\end{align}

Since $(\M,g)$ is simple, the eikonal equation (\ref{2.5}) can be solved globally on $\M$. To see this, we first extend the simple manifold $(\M,g)$ to a simple, compact manifold $(\M_1,g)$  with $\M$ contained in the interior of $\M_1$. We pick $y\in\d\M_1$ and consider polar normal coordinates $(r,\theta)$ on $\M_1$ given by $x=\exp_y(r\theta)$ for $r>0$ and $\theta\in S_y\M_1=\{v\in T_y\M_1:\abs{v}_{g(y)}=1\}$. Letting $\nu(y)$ denote the outward unit normal to $\d\M_1$ with respect to the metric $g$, we define $\d_+S_y\M_1=\{\theta\in S_y\M_1:\pair{\theta,\nu(y)}_{g(y)}<0\}$. According to the Gauss Lemma (see e.g. \cite[Chapter 9, Lemma 15]{29}), in these coordinates the metric takes the form $g(r,\theta)=dr^2+g_0(r,\theta)$ with $g_0(r,\theta)$ a metric on $\{\theta\in S_y\M_1:\pair{\nu(y),\theta}_{g(y)}\leq0\}$ depending smoothly on $r$. In polar normal coordinates $dV_{g}=\mu(r,\theta)^{\frac{1}{2}}drd\theta$, where $\mu=\det{g_0}$ and $d\theta$ is the usual spherical volume form on $\d_+S_y\M$. For a function $f\in L^1(\M)$ extended by zero to $\M_1$, we can extend $dV_{g}$ to a volume form on $T_y(\M_1)$ and get\[\int_\M f(x)dV_g(x)=\int_0^\infty\int_{\d_+S_y\M_1}f(r,\theta)\mu(r,\theta)^{\frac{1}{2}}drd\theta.\]

We choose \begin{equation}\label{2.11}\psi(x)=dist_g(y,x)\end{equation}where $dist_g$ denotes the Riemannian distance function. Since $\psi(r,\theta)=r$, we can easily check that $\psi$ solves the eikonal equation (\ref{2.5}).\\

We now look towards solving the transport equations (\ref{2.6})-(\ref{2.7}). First, note that \begin{equation}\label{2c1}\nabla_g\psi(r,\theta)=\d_r=\gamma'_{y,\theta}(r)=\theta.\end{equation}

Therefore, we rewrite the transport equations (\ref{2.6})-(\ref{2.7}) in polar normal coordinates based at $y\in\d\M_1$ to obtain\begin{equation}\label{tp1}\d_ra_j+\Big(\frac{\d_r\mu}{4\mu}\Big)a_j+i\big(A_j\theta\big)a_j=0,\end{equation}\begin{equation}\label{tp2}\d_rb_j+\Big(\frac{\d_r\mu}{4\mu}\Big)b_j+i(A_j\theta)b_j=\beta_j(t,r,\theta),\end{equation}where $A_j\theta$ denotes $A_j(t,r,\theta)\theta$ and $\beta_j$ denotes $(i\d_t+\Delta_{g,A_j(t)}+q_j)a_j/2$.\\

Applying \cite[Section 3, Theorem 5]{33}, we find $\tilde{A}_1\in W^{6,\infty}((0,T)\times\M_1;T^\ast\M_1) $ such that for $t\in(0,T)$ the support of $\tilde{A}_1(t,\cdot)$ is contained in the interior of $\M_1$, and we have $\tilde{A}_1=A_1$ on $(0,T)\times\M$ and $\tilde{\norm{A_1}}_{W^{6,\infty}((0,T)\times\M_1;T^\ast\M_1)}\leq C\norm{A_1}_{W^{6,\infty}((0,T)\times\M;T^\ast\M)}$, where $C$ depends only on $\M$. Then for all $t\in(0,T)$ we put:\[\tilde{A}_2(t,x)=\begin{cases}A_2(t,x),\textrm{ if }x\in\M,\\\tilde{A}_1(t,x),\textrm{ if }x\in\M_1\setminus\M.\end{cases}\]
Then according to (\ref{2.1}), $\tilde{A}_2\in W^{6,\infty}((0,T)\times\M_1;T^\ast\M_1)$ and
\[\max_{j=1,2}\norm{\tilde{A}_j}_{ W^{6,\infty}((0,T)\times\M_1;T^\ast\M_1)}\leq C\max_{j=1,2}\norm{A_j}_{ W^{6,\infty}((0,T)\times\M;T^\ast\M)}.\]
Similarly, for $j=1,2$, we consider $\tilde{q}_j\in W^{4,\infty}((0,T)\times\M_1) $ such that for $t\in(0,T)$ the support of $\tilde{q}_j(t,\cdot)$ is contained in the interior of $\M_1$, and we have $\tilde{q}_j=q_j$ on $(0,T)\times\M$ and $\tilde{\norm{q_j}}_{W^{4,\infty}((0,T)\times\M_1)}\leq C\norm{q_j}_{W^{4,\infty}((0,T)\times\M)}$. Note that here we do not impose that $\tilde{q}_1$ and $\tilde{q}_2$ should coincide on $(0,T)\times (\M_1\setminus \M)$. 

For any $h\in H^5((0,T)\times \d_+S_y\M_1)$, the functions\begin{equation}\label{2.13}a_1(t,r,\theta)=\chi(t)h(t,\theta)\mu(r,\theta)^{-\frac{1}{4}}\exp\Big(i\int_0^{+\infty}\tilde{A}_1(t,r+s,\theta)\theta ds\Big),\end{equation}
\begin{equation}\label{2.14}a_2(t,r,\theta)=\chi(t)\mu(r,\theta)^{-\frac{1}{4}}\exp\Big(i\int_0^{+\infty}\tilde{A}_2(t,r+s,\theta)\theta ds\Big),\end{equation}
are solutions to the transport equations (\ref{tp1}). In the same way, for $\tilde{\beta}_j=(i\d_t+\Delta_{g,\tilde{A}_j(t)}+\tilde{q}_j)a_j/2$, we fix
\begin{equation}\label{2.15}b_j(t,r,\theta)=\mu(r,\theta)^{-\frac{1}{4}}\int_0^r\Big[\exp\Big(-i\int_{s_2}^r\tilde{A}_j(t,s_1,\theta)\theta ds_1\Big)\beta_j(t,s_2,\theta)\mu^{\frac{1}{4}}(s_2,\theta)\Big]ds_2\end{equation}
which is a solution of (\ref{tp2}). Here we fix $\chi\in C_0^\infty((\tau,T-\tau))$ satisfying $\chi=1$ on $[2\tau,T-2\tau]$, $0\leq\chi\leq1$ and $\norm{\chi}_{W^{k,\infty}(\R)}\leq C_k\tau^{-k}$ with $C_k$ independent of $\tau$.\\

Let us now consider the remainder terms $R_{j,\lambda}$, $j=1,2$. In view of (\ref{2.13})-(\ref{2.15}), we deduce the following bounds:
\begin{equation}\label{2.16}\norm{a_1}_{H^3((0,T)\times\M)}\leq C\norm{h}_{H^3((0,T)\times \d_+S_y\M_1)}\tau^{-3},\quad\norm{b_1}_{H^3((0,T)\times\M)}\leq C\norm{h}_{H^5((0,T)\times \d_+S_y\M_1)}\tau^{-4},\end{equation}
\begin{equation}\label{2.17}\norm{(i\d_t+\Delta_{g,A_1(t)}+q_1)b_1}_{L^2((0,T)\times\M)}\leq C\norm{h}_{H^4((0,T)\times \d_+S_y\M_1)}\tau^{-2},\end{equation}
\begin{equation}\label{2.18}\norm{a_2}_{H^3((0,T)\times\M)}\leq C\tau^{-3},\quad\norm{b_2}_{H^3((0,T)\times\M)}\leq C\tau^{-4},\quad\norm{(i\d_t+\Delta_{g,A_2(t)}+q_2)b_2}_{L^2((0,T)\times\M)}\leq C\tau^{-2},\end{equation}
where $C$ depends only on $\M$, $T$ and $\norm{A_1}_{W^{5,\infty}((0,T)\times\M}+\norm{A_2}_{W^{5,\infty}((0,T)\times\M}$. Then applying \cite[Lemma 2.1]{22}, we see that problem (\ref{2.9}) admits unique solutions $R_{j,\lambda}$ for $j=1,2$ with $R_{j,\lambda}\in C([0,T];H^1_0(\M)\cap H^2(\M))\cap\mathcal{C}^1([0,T];L^2(\M))$. On the other hand, from the a priori estimate \cite[(10.10), page 324]{23}, we deduce that\begin{equation}\label{2.19}\norm{R_{1,\lambda}}_{L^2((0,T)\times\M)}\leq C\frac{\norm{(i\d_t+\Delta_{A_1(t)}+q_1)b_1}_{L^2((0,T)\times\M)}}{\lambda}\leq C\norm{h}_{H^4((0,T)\times \d_+S_y\M_1)}\tau^{-2}\lambda^{-1}.\end{equation}
Moreover, applying \cite[Lemma 2.1]{22} we find that\[\norm{R_{1,\lambda}}_{L^2(0,T;H^2(\M))}\leq C\frac{\norm{e^{i\lambda(\psi(x)-\lambda t)}(i\d_t+\Delta_{A_1(t)}+q_1)b_1}_{H^1(0,T;L^2(\M))}}{\lambda}\leq C\norm{h}_{H^4((0,T)\times \d_+S_y\M_1)}\tau^{-3}\lambda,\]and by interpolation between this estimate and (\ref{2.19}) we deduce\[\norm{R_{1,\lambda}}_{L^2(0,T;H^1(\M))}\leq C\norm{h}_{H^4((0,T)\times \d_+S_y\M_1)}\tau^{-3}.\]
Combining this with (\ref{2.19}) we obtain\begin{equation}\label{2.20}\norm{R_{1,\lambda}}_{L^2(0,T;H^1(\M))}+\lambda\norm{R_{1,\lambda}}_{L^2((0,T)\times\M)}\leq C\norm{h}_{H^4((0,T)\times \d_+S_y\M_1)}\tau^{-3}.\end{equation}
In a similar manner, we derive the estimate \begin{equation}\label{2.21}\norm{R_{2,\lambda}}_{L^2(0,T;H^1(\M))}+\lambda\norm{R_{2,\lambda}}_{L^2((0,T)\times\M)}\leq C\tau^{-3}.\end{equation}
This completes our construction of the geometric optics solutions of (\ref{2.2}).

\section{Unique Determination of the Potentials Modulo Gauge Invariance}\label{section4}
 We recall that any $1$-form $\omega\in W^{1,p}(\M;T^\ast\M)$, with $p\in[2,\infty)$ admits a Hodge decomposition via $\omega=\omega^{sol}+d\phi$, where $\omega^{sol}\in W^{1,p}(\M;T^\ast\M)$ is the solenoidal part of $\omega$ which satisfies $\delta\omega^{sol}=0$ (see (\ref{coderivative}) for the definition of coderivative operator $\delta$) and $\phi\in W^{2,p}(\M)\cap H^1_0(\M)$. Let us first prove an extension of this Hodge decomposition for the $1$-form $A\in W^{6,\infty}((0,T)\times\M;T^\ast\M)$ given by the following:

\begin{lemma}\label{L1}
Let $A\in W^{6,\infty}((0,T)\times\M;T^\ast\M)$. Then we can decompose $A$ into\begin{equation}\label{3.1}A=A^{sol}+d\phi,\end{equation}where, for any $p\in(2,\infty)$, $A^{sol}\in W^{5,\infty}((0,T)\times\M;T^\ast\M)$, and $\phi\in L^\infty(0,T;W^{7,p}(\M))\cap W^{5,\infty}(0,T;L^\infty(\M))$, we have $\phi\vert_{(0,T)\times\d\M}=0$ and $\delta A^{sol}=0$.
\end{lemma}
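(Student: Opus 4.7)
The plan is to define $\phi$ via a parametric elliptic Dirichlet problem. For each fixed $t\in(0,T)$, let $\phi(t,\cdot)\in H^1_0(\M)$ be the unique solution to
\[
\Delta_g\phi(t,\cdot)=\delta A(t,\cdot)\textrm{ in }\M,\qquad \phi(t,\cdot)\vert_{\d\M}=0,
\]
and set $A^{sol}:=A-d\phi$. A direct computation from \eqref{coderivative} gives $\delta d\phi=\Delta_g\phi$, so $\delta A^{sol}=\delta A-\Delta_g\phi=0$ by construction, while $\phi\vert_{(0,T)\times\d\M}=0$ is enforced by the boundary condition. The only substantive work is verifying the claimed regularity of $\phi$ and of $A^{sol}$.

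For the spatial regularity, since $\delta A(t,\cdot)\in W^{5,\infty}(\M)\subset W^{5,p}(\M)$ for every $p\in(2,\infty)$, standard elliptic regularity for the Dirichlet Laplacian on the smooth compact manifold-with-boundary $\M$ yields $\phi(t,\cdot)\in W^{7,p}(\M)\cap H^1_0(\M)$ with norm bounded uniformly in $t$; hence $\phi\in L^\infty(0,T;W^{7,p}(\M))$. For the time regularity, differentiate the PDE $k$ times in $t$, which is justified since $A\in W^{6,\infty}((0,T)\times\M)$: each $\d_t^k\phi$ solves $\Delta_g\d_t^k\phi=\delta\d_t^kA$ with zero Dirichlet data, where $\delta\d_t^kA\in L^\infty(0,T;W^{5-k,\infty}(\M))$ for $k\leq5$. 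Elliptic regularity then gives $\d_t^k\phi\in L^\infty(0,T;W^{7-k,p}(\M))$ for every $p<\infty$; taking $k=5$ and $p>\nicefrac{n}{2}$, the Sobolev embedding $W^{2,p}(\M)\hookrightarrow L^\infty(\M)$ delivers $\phi\in W^{5,\infty}(0,T;L^\infty(\M))$.

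To conclude $A^{sol}\in W^{5,\infty}((0,T)\times\M;T^\ast\M)$, I would verify that every mixed derivative $\d_t^k\d_x^\beta d\phi$ with $k+\abs{\beta}\leq5$ is bounded. Combining $\d_t^k\phi\in L^\infty(0,T;W^{7-k,p}(\M))$ with the Sobolev embedding $W^{7-k,p}\hookrightarrow W^{6-k,\infty}$, valid for $p>n$, yields $\d_t^k\phi\in L^\infty(0,T;W^{6-k,\infty}(\M))$, and therefore $\d_t^k\d_x^\beta d\phi\in L^\infty$ whenever $\abs{\beta}+1\leq6-k$, which is exactly the constraint $k+\abs{\beta}\leq5$. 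The main obstacle I anticipate is really just bookkeeping: $A$ has only $W^{6,\infty}$ regularity, so the elliptic gain of two spatial derivatives and the Sobolev loss of one derivative just barely balance to deliver the required $W^{5,\infty}$ joint regularity, with no margin to spare—in particular this is why the hypothesis on $A$ is stated at the order $6$ rather than at $5$.
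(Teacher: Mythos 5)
Your proposal is correct and follows essentially the same route as the paper: define $\phi(t,\cdot)$ as the solution of the Dirichlet problem $\Delta_g\phi=\delta A$, obtain spatial regularity from elliptic theory, obtain time regularity by differentiating the equation in $t$, and set $A^{sol}=A-d\phi$. Your bookkeeping of the mixed derivatives $\d_t^k\phi\in L^\infty(0,T;W^{7-k,p}(\M))$ is in fact slightly more explicit than the paper's appeal to the Sobolev embedding theorem, but the argument is the same.
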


\begin{proof}
We fix $\phi$ to be the solution for all $t\in[0,T]$ of the boundary value problem\begin{alignat*}{2}-\Delta_g\phi(t,\cdot)&=-\delta A(t,\cdot)\quad&&\textrm{in }\M,\\\phi(t,\cdot)&=0&&\textrm{on }\d\M.\end{alignat*}
Since $\delta A(t,\cdot)\in W^{5,\infty}(\M)$, according to \cite[Theorem 2.5.1.1]{16}, this problem admits a unique solution $\phi(t,\cdot)\in\bigcap_{p\in[2,\infty)}W^{7,p}(\M)$. Moreover, since $\delta A\in L^\infty(0,T;W^{5,\infty}(\M))$, we also deduce that $\phi\in\bigcap_{p\in[2,\infty)}L^\infty(0,T;W^{7,p}(\M))$. In the same way, using the fact that $\delta A\in W^{5,\infty}(0,T;L^\infty(\M))$, we prove that $\phi\in\bigcap_{p\in[2,\infty)} W^{5,\infty}(0,T;W^{2,p}(\M))$. We then use the Sobolev embedding theorem to deduce that $\phi\in W^{5,\infty}(0,T;L^\infty(\M))$. We fix $A^{sol}=A-d\phi$ and by the Sobolev embedding theorem, deduce that $A^{sol}\in W^{5,\infty}((0,T)\times\M;T^\ast\M)$. Moreover, we see that\[\delta A^{sol}=\delta A-\delta d\phi=\delta A-\Delta_g\phi=0.\]Thus (\ref{3.1}) is the Hodge decomposition of $A$ and the proof of the lemma is complete.
\end{proof}

We start by considering the implication\[\Lambda_{A_1,q_1}=\Lambda_{A_2,q_2}\Rightarrow A^{sol}=0,\]where $A^{sol}$ is the solenoidal part of the Hodge decomposition (\ref{3.1}) of $A$. For this purpose, we establish the following intermediate result.

\begin{lemma}\label{L2}
Let $A_1,A_2\in W^{6,\infty}((0,T)\times\M;T^\ast\M)$ satisfy the matching condition (\ref{1.3}), and fix $A=A_1-A_2$ extended by $0$ on $(0,T)\times(\M_1\setminus\M)$. In particular, for $\tilde{A}_j$ the extension of $A_j$ to $(0,T)\times\M_1$ introduced in the previous section, we have $A=\tilde{A}_1-\tilde{A}_2$. Assuming these conditions are fulfilled, we find that
\begin{equation}\begin{split}\label{3.2}\abs{\int_0^T\int_0^\infty\int_{\d_+S_y\M_1}i(A(r,\theta)\theta)\chi^2(t)h(t,\theta)\exp\Big(i\int_0^\infty A(t,r+s,\theta)\theta ds\Big)d\theta drdt}\\\leq C\big[\norm{\Lambda_{A_1,q_1}-\Lambda_{A_2,q_2}}\lambda^5\tau^{-8}\norm{h}_{H^5((0,T)\times \d_+S_y\M_1)}+\norm{h}_{H^4((0,T)\times \d_+S_y\M_1)}\tau^{-6}\lambda^{-1}\big].\end{split}\end{equation}
\end{lemma}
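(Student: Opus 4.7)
The strategy is to derive an integral identity from the closeness of the two D-to-N maps, substitute the geometric optics solutions of Section~\ref{section3}, and isolate the $\lambda$-leading part, which after passage to polar normal coordinates based at $y\in\d\M_1$ reproduces the integral on the left of (\ref{3.2}). Concretely, let $u_1$ denote the GO solution for $(A_1,q_1)$ with $u_1(0)=0$ and $f_1:=u_1|_{(0,T)\times\d\M}\in H^{9/4,3/2}_0((0,T)\times\d\M)$ (which holds because of the cutoff $\chi$). Let $v_1$ solve the $(A_2,q_2)$ Schr\"odinger equation with Dirichlet data $f_1$ and $v_1(0)=0$, so that $w:=u_1-v_1$ has vanishing Cauchy data, satisfies
\[(i\d_t+\Delta_{g,A_2(t)}+q_2)w=(\Delta_{g,A_2(t)}-\Delta_{g,A_1(t)})u_1+(q_2-q_1)u_1,\]
and $(\d_\nu+iA_2\nu)w|_{(0,T)\times\d\M}=(\Lambda_{A_1,q_1}-\Lambda_{A_2,q_2})f_1$, the latter using (\ref{1.3}) to identify $A_1\nu=A_2\nu$ on the boundary.

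Pair the above equation against $\overline{u_2}$, where $u_2$ is the GO solution for $(A_2,q_2)$ with $u_2(T)=0$, and integrate by parts over $(0,T)\times\M$. The initial data of $w$, the terminal data of $u_2$, and the homogeneous Dirichlet data of $w$ kill all unwanted boundary contributions; using the expansion
\[\Delta_{g,A_1}u-\Delta_{g,A_2}u=2iA(\nabla_g u)+i(\delta A)u-\pair{A,A_1+A_2}_g u,\qquad A:=A_1-A_2,\]
this produces the identity
\[\int_0^T\!\!\int_\M\!\!\Bigl[-2iA(\nabla_g u_1)-i(\delta A)u_1+\pair{A,A_1+A_2}_g u_1+(q_2-q_1)u_1\Bigr]\overline{u_2}\,dV_g\,dt=\int_0^T\!\!\int_{\d\M}(\Lambda_{A_1,q_1}-\Lambda_{A_2,q_2})f_1\,\overline{u_2}\,d\sigma_g\,dt.\]

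Now substitute the ansatz (\ref{2.4}). The key cancellation is that the phase factors in $u_1$ and $\overline{u_2}$ exactly annihilate in the product, so the $\lambda$-principal contribution to the left-hand side comes from the $-2iA(\nabla_g u_1)\overline{u_2}$ term, in which the gradient brings down $i\lambda a_1\nabla_g\psi$. The resulting principal part is $2\lambda\int_0^T\int_\M a_1\overline{a_2}\,A(\nabla_g\psi)\,dV_g\,dt$. Passing to polar normal coordinates at $y\in\d\M_1$, where $\nabla_g\psi=\theta$ and $dV_g=\mu^{1/2}dr\,d\theta$, and using (\ref{2.13})--(\ref{2.14}), the product simplifies to $a_1\overline{a_2}=\chi^2(t)h(t,\theta)\mu^{-1/2}\exp\bigl(i\int_0^\infty A(t,r+s,\theta)\theta\,ds\bigr)$; the factor $\mu^{-1/2}$ cancels the Jacobian $\mu^{1/2}$, and the modulus of the principal part equals $2\lambda$ times the integral appearing in (\ref{3.2}).

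All remaining terms must be treated as lower order in $\lambda$. On the left, the sub-principal pieces---namely the $b_j/\lambda$ contributions, the cross-terms involving $R_{j,\lambda}$, and the non-derivative pieces $-i(\delta A)$, $\pair{A,A_1+A_2}_g$, $(q_2-q_1)$---are $O(1)$ in $\lambda$ in the integrand and are handled by Cauchy--Schwarz combined with the a priori estimates (\ref{2.16})--(\ref{2.21}); dividing through by the principal factor $2\lambda$ then produces the $\tau^{-6}\lambda^{-1}\norm{h}_{H^4((0,T)\times\d_+S_y\M_1)}$ term of (\ref{3.2}). On the right, Cauchy--Schwarz yields
\[\Bigl|\int_0^T\!\!\int_{\d\M}(\Lambda_{A_1,q_1}-\Lambda_{A_2,q_2})f_1\,\overline{u_2}\,d\sigma_g\,dt\Bigr|\le\norm{\Lambda_{A_1,q_1}-\Lambda_{A_2,q_2}}\,\norm{f_1}_{H^{9/4,3/2}_0}\,\norm{u_2}_{L^2((0,T)\times\d\M)},\]
and since each $t$-derivative of $e^{-i\lambda^2 t}$ contributes $\lambda^2$, each spatial derivative of $e^{i\lambda\psi}$ contributes $\lambda$, and each derivative of $\chi$ costs a $\tau^{-1}$, a careful tally produces the principal term $\norm{\Lambda_{A_1,q_1}-\Lambda_{A_2,q_2}}\lambda^5\tau^{-8}\norm{h}_{H^5((0,T)\times\d_+S_y\M_1)}$ of (\ref{3.2}). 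The main technical obstacle is precisely this bookkeeping of $\lambda$- and $\tau$-exponents through the several anisotropic Sobolev norms; the conceptual structure---integral identity, GO substitution, phase cancellation, and passage to polar normal coordinates---is standard.
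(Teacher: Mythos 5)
Your proposal follows essentially the same route as the paper: form $w$ as the difference between the GO solution $u_1$ and the $(A_2,q_2)$-solution with the same Dirichlet trace, pair the resulting inhomogeneous equation (with source $2iA\nabla_g u_1+Vu_1$) against $\overline{u_2}$, bound the boundary term by the D-to-N difference, isolate the $\lambda$-leading term $2i\lambda(A\nabla_g\psi)a_1\overline{a_2}$, and pass to polar normal coordinates where $\mu^{-1/2}$ cancels the Jacobian. The remaining differences (sign convention for $w$, writing the trace as $f_1$ rather than $\psi_{1,\lambda}$, quoting $\lambda^5$ after rather than $\lambda^6$ before the division by $\lambda$) are cosmetic, so the argument is correct and matches the paper's proof.
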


\begin{proof}
We fix $u_j$, $j=1,2$ the solutions for $j=1,2$ respectively of (\ref{2.2}) taking the form (\ref{2.4}). We write also $\psi_{j,\lambda}=u_j-R_{j,\lambda}$. We consider $v\in H^{1,2}((0,T)\times\M)$ solving
\begin{align*}\begin{aligned}&i\d_tv+\Delta_{g,A_2(t)}v+q_2v=0\ &&\textrm{in }(0,T)\times\M,\\
&v(0,\cdot)=0\ &&\textrm{in }\M,\\
&v=\psi_{1,\lambda}\ &&\textrm{on }(0,T)\times\d\M,\end{aligned}\end{align*}
and consider $w=v-u_1$ which solves
\begin{align*}\begin{aligned}&i\d_tw+\Delta_{g,A_2(t)}w+q_2w=2iA\nabla_gu_1+Vu_1\ &&\textrm{in }(0,T)\times\M,\\
&w(0,\cdot)=0\ &&\textrm{in }\M,\\
&w=0\ &&\textrm{on }(0,T)\times\d\M,\end{aligned}\end{align*}
where $V=i\delta A+\abs{A_2}_g^2-\abs{A_1}_g^2+q_1-q_2$. Multiplying this equation by $\overline{u_2}$ and integrating by parts yields\begin{equation}\label{3.3}\int_0^T\int_\M(2iA\nabla_gu_1+Vu_1)\overline{u_2}dV_g(x)dt=\int_0^T\int_{\d\M}\d_\nu w\overline{u_2}d\sigma_gdt.\end{equation}
Moreover,\[\abs{\int_\Sigma\d_\nu w\overline{u_2}d\sigma_gdt}\leq\norm{(\Lambda_{A_1,q_1}-\Lambda_{A_2,q_2})\psi_{1,\lambda}}_{L^2((0,T)\times\d\M)}\norm{\psi_{2,\lambda}}_{L^2((0,T)\times\d\M)},\] and (\ref{2.16})-(\ref{2.17}) imply
\begin{equation}\begin{split}\label{3.4}\abs{\int_{(0,T)\times\d\M}\d_\nu w\overline{u_2}d\sigma_gdt}\leq C\norm{\Lambda_{A_1,q_1}-\Lambda_{A_2,q_2}}\norm{\psi_{1,\lambda}}_{H^{\frac{9}{4},\frac{3}{2}}((0,T)\times\M)}\norm{\psi_{2,\lambda}}_{L^2((0,T)\times\M)}\\\leq C\norm{\Lambda_{A_1,q_1}-\Lambda_{A_2,q_2}}\lambda^6\norm{h}_{H^5((0,T)\times \d_+S_y\M_1)}\tau^{-8}.\end{split}\end{equation}
Here $C$ is a generic constant which depends only on $\M$, $T$ and $\norm{A_1}_{W^{5,\infty}((0,T)\times\M)}+\norm{A_2}_{W^{5,\infty}((0,T)\times\M)}$. On the other hand, we have that
\begin{equation}\begin{split}\label{3.5}\int_0^T\int_\M(2iA\nabla_gu_1+Vu_1)\overline{u_2}dV_g(x)dt=\\
=\lambda\int_{(0,T)\times\M}2i(A\nabla_g\psi)a_1\overline{a_2}dV_g(x)dt+\lambda\int_{(0,T)\times\M}2i(A\nabla_g\psi)a_1\Big(\frac{\overline{b_2}}{\lambda}+e^{i\lambda(\psi(x)-\lambda t)}\overline{R_{2,\lambda}}\Big)dV_g(x)dt\\+\lambda\int_{(0,T)\times\M}2i(A\nabla_g\psi)\Big(\frac{b_1}{\lambda}+e^{-i\lambda(\psi(x)-\lambda t)}R_{1,\lambda}\Big)\overline{a_2}dV_g(x)dt\\+\lambda\int_{(0,T)\times\M}2i(A\nabla_g\psi)\Big(\frac{b_1}{\lambda}+e^{-i\lambda(\psi(x)-\lambda t)}R_{1,\lambda}\Big)\Big(\frac{\overline{b_2}}{\lambda}+e^{i\lambda(\psi(x)-\lambda t)}\overline{R_{2,\lambda}}\Big)dV_g(x)dt\\+\int_{(0,T)\times\M}\Big(2ie^{i\lambda(\psi(x)-\lambda t)}A\Big(\nabla_ga_1+\frac{\nabla_gb_1}{\lambda}+\nabla_gR_{1,\lambda}\Big)+Vu_1\Big)\overline{u_2}dV_g(x)dt.\end{split}\end{equation}
We then divide (\ref{3.5}) by $\lambda$ and apply (\ref{2.20})-(\ref{2.21}) to obtain
\[\begin{split}\abs{\int_{(0,T)\times\M}i(A\nabla_g\psi)a_1\overline{a_2}dV_g(x)dt}\\\leq\lambda^{-1}\abs{\int_{(0,T)\times\M}(2iA\nabla_gu_1+Vu_1)\overline{u_2}dV_g(x)dt}+C\norm{h}_{H^4((0,T)\times \d_+S_y\M_1)}\tau^{-6}\lambda^{-1}.\end{split}\]
Using polar normal coordinates in the left hand side of the above gives us\[\begin{split}\abs{\int_0^T\int_0^\infty\int_{\d_+S_y\M_1}i(A(t,r,\theta)\theta)\chi^2(t)h(t,\theta)\mu(r,\theta)^{-\frac{1}{2}}\exp\Big(i\int_0^\infty A(t,r+s,\theta)\theta ds\Big)dV_{g}(r,\theta)dt}\\\leq\lambda^{-1}\abs{\int_{(0,T)\times\M}(2iA\nabla_gu_1+Vu_1)\overline{u_2}dV_g(x)dt}+C\norm{h}_{H^4((0,T)\times \d_+S_y\M_1}\tau^{-6}\lambda^{-1}.\end{split}\]
Using now the fact that $\mu(r,\theta)^{-\frac{1}{2}}dV_{g}(r,\theta)=drd\theta$, we conclude that
\[\begin{split}\abs{\int_0^T\int_0^\infty\int_{\d_+S_y\M_1}i(A(t,r,\theta)\theta)\chi^2(t)h(t,\theta)\exp\Big(i\int_0^\infty A(t,r+s,\theta)\theta ds\Big)d\theta drdt}\\\leq\lambda^{-1}\abs{\int_{(0,T)\times\M}(2iA\nabla_gu_1+Vu_1)\overline{u_2}}dV_g(x)dt+C\norm{h}_{H^4((0,T)\times \d_+S_y\M_1)}\tau^{-6}\lambda^{-1}.\end{split}\]
We use this last estimate together with (\ref{3.3}) and (\ref{3.4}) to obtain (\ref{3.2}).\end{proof}

Armed with the above, we are now in a position to complete the proof of the uniqueness result.
\begin{proof}[Proof of Theorem \ref{T1}]
Let us assume that $\Lambda_{A_1,q_1}=\Lambda_{A_2,q_2}$, and begin by proving that this condition implies that $A^{sol}=0$. We recall also Definition \ref{d2} of $I_1$, the geodesic ray transform for $1$-forms given by (\ref{I1}). According to s-injectivity of the transform $I_1$ (consult e.g. \cite{1} or \cite[Theorem 4]{32}), it is enough to show that $I_1A(t,\cdot)=0$. Then, sending $\lambda\rightarrow\infty$ in (\ref{3.2}) we obtain\begin{equation}\label{3.6}\int_0^T\int_0^\infty\int_{\d_+S_y\M_1}i(A(t,r,\theta)\theta)\chi^2(t)h(t,\theta)\exp\Big(i\int_0^\infty A(t,r+s,\theta)\theta ds\Big)d\theta drdt=0.\end{equation}
On the other hand, notice that, due to (\ref{2c1}), for $A=\sum_{j=1}^na_jdx^j$ we have
\[\begin{split}\int_0^\infty A(t,r,\theta)\theta dr=&\int_0^{\tau_+(y,\theta)}A(t,r,\theta)\theta dr\\=&\int_0^{\tau_+(y,\theta)}A(t,\gamma_{y,\theta}(s))\gamma'_{y,\theta}(s)ds=I_1[A(t,\cdot)](y,\theta).\end{split}\]
Thus we deduce that\[\begin{split}\int_0^\infty i(A(t,r,\theta)\theta)\exp\Big(i\int_0^\infty A(t,r+s,\theta)\theta ds\Big)dr=&\int_0^\infty i(A(t,r,\theta)\theta)\exp\Big(i\int_r^\infty A(t,s,\theta)\theta ds\Big)dr\\=&-\int_0^\infty\d_r\exp\Big(i\int_r^\infty A(t,s,\theta)\theta ds\Big)dr\\=&\exp\Big(i\int_0^\infty A(t,s,\theta)\theta ds\Big)-1=e^{iI_1[A(t,\cdot)](y,\theta)}-1.\end{split}\]
Using this identity in (\ref{3.6}) and applying Fubini's theorem, we get\[\int_0^T\int_{\d_+S_y\M_1}\chi^2(t)\big[e^{iI_1[A(t,\cdot)](y,\theta)}-1\big]h(t,\theta)d\theta dt=0.\]Since $h\in C_0^\infty((0,T)\times\d_+S\M_1)$ is arbitrary, we deduce that\[\chi^2(t)\big[e^{iI_1[A(t,\cdot)](y,\theta)}-1\big]=0,\quad t\in(0,T),\ (y,\theta)\in\d_+S\M_1.\]But since $\tau\in(0,\frac{T}{4})$ is arbitrary and $\chi(t)=1$ for $t\in[2\tau,T-2\tau]$, we see that\[e^{iI_1[A(t,\cdot)](y,\theta)}=1,\quad t\in[0,T],\ (y,\theta)\in\d_+S\M_1,\] and hence deduce that for all $t\in[0,T]$, $I_1[A(t,\cdot)](y,\theta)\in2\pi\Z$. Since $A\in W^{6,\infty}((0,T)\times\M_1;T^\ast\M_1)$ one can check that $I_1A\in C([0,T]\times\d_+S\M_1)$. Then since for all $y\in\d\M_1$ it holds that $\d_+S_y\M_1$ is connected, we conclude that the map $[0,T]\times \d_+S_y\M_1\ni(t,\theta)\mapsto I_1[A(t,\cdot)](y,\theta)$ is constant. On the other hand, note that $A=0$ on $\M_1\setminus\M$, so that for any $y\in\d\M_1$ there exists $\theta\in \d_+S_y\M_1$ such that for all $t\in[0,T]$ we have $I_1[A(t,\cdot)](y,\theta)=0$. Therefore we conclude that $A^{sol}=0$.\\

We can then use the Hodge decomposition (\ref{3.1}), to deduce the existence of $\phi\in W^{5,\infty}((0,T)\times\M)$ satisfying $\phi\vert_{(0,T)\times\d\M}=0$ such that $A_2=A_1+d\phi$. Thus the proof will be completed if we show that $q_2=q_1-\d_t\phi$. Since $A_2=A_1+d\phi$ we can put $q_3=q_1-\d_t\phi$ and by gauge invariance we have $\Lambda_{A_1,q_1}=\Lambda_{A_2,q_3}$. Thus, by assumption it follows that\begin{equation}\label{3.7}\Lambda_{A_2,q_3}=\Lambda_{A_1,q_1}=\Lambda_{A_2,q_2}.\end{equation}

Therefore, the proof will be complete if we prove that condition (\ref{3.7}) implies that $q_3=q_2$. For this purpose, we let $y\in\d\M_1$, $h\in C_0^\infty((0,T)\times \d_+S_y\M_1)$. We consider $u_2$ the solution of (\ref{2.2}) for $j=2$ taking the form (\ref{2.4}), and $u_1$ the solution of (\ref{2.2}) but with $A_j$ replaced by $A_2$ and $q_j$ replaced by $q_3$, again taking the form (\ref{2.2}). Note that $q_3=q_1-\d_t\phi\in W^{4,\infty}((0,T)\times\M)$, so this construction is still valid. In particular, taking $A_1=A_2$ in (\ref{3.3}) we obtain\[\int_0^T\int_\M(q_3-q_2)u_1\overline{u_2}dV_g(x)dt=\int_0^T\int_{\d\M}\big[(\Lambda_{A_2,q_3}-\Lambda_{A_2,q_2})\psi_{1,\lambda}\big]\overline{u_2}d\sigma_gdt=0.\]

Fixing $q=q_3-q_2$ extended by $0$ on $(0,T)\times(\M_1\setminus\M)$, we get \[\begin{split}\int_0^T\int_\M qu_1\overline{u_2}dV_g(x)dt=&\int_{(0,T)\times\M}qa_1\overline{a_2}dV_g(x)dt+\int_{(0,T)\times\M}qa_1\Big(\frac{\overline{b_2}}{\lambda}+e^{i\lambda(\psi(x)-\lambda t)}\overline{R_{2,\lambda}}\Big)dV_g(x)dt\\+&\int_{(0,T)\times\M}q\Big(\frac{b_1}{\lambda}+e^{-i\lambda(\psi(x)-\lambda t)}R_{1,\lambda}\Big)\overline{a_2}dV_g(x)dt\\+&\int_{(0,T)\times\M}q\Big(\frac{b_1}{\lambda}+e^{-i\lambda(\psi(x)-\lambda t)}R_{1,\lambda}\Big)\Big(\frac{\overline{b_2}}{\lambda}+e^{i\lambda(\psi(x)-\lambda t)}\overline{R_{2,\lambda}}\Big)dV_g(x)dt.\end{split}\]
Then, we argue similarly to the proof of Lemma \ref{L2}. Using polar normal coordinates and (\ref{2.20})-(\ref{2.21}) we get\[\abs{\int_0^T\int_0^\infty\int_{\d_+S_y\M_1}\chi^2(t)q(t,r,\theta)\overline{h(t,\theta)}d\theta drdt}\leq C\norm{h}_{H^4((0,T)\times \d_+S_y\M_1)}\tau^{-6}\lambda^{-1}.\]

And we send $\lambda\rightarrow\infty$ to obtain\begin{equation}\label{3.8}\int_0^T\int_0^\infty\int_{\d_+S_y\M_1}\chi^2(t)q(t,r,\theta)\overline{h(t,\theta)}d\theta drdt=0.\end{equation}

Let us recall the definition of the geodesic ray transform $I_0$ acting on functions, given by (\ref{I0}). In light of (\ref{3.8}), we allow $y\in\d\M$ and $h\in C_0^\infty((0,T)\times \d_+S_y\M_1)$ to be arbitrary, whence we deduce that\[\chi^2(t)I_0[q(t,\cdot)](y,\theta)=\int_0^{\tau_+(y,\theta)}\chi^2(t)q(t,r,\theta)dr=0,\quad t\in(0,T),\ (y,\theta)\in\d_+S\M_1.\]

Now, since $\tau\in(0,\frac{T}{4})$ is arbitrary and $\chi=1$ on $[2\tau,T-2\tau]$, we conclude that $I_0[q(t,\cdot)]=0$ for all $t\in(0,T)$. Then by injectivity of $I_0$ on $L^2(\M)$ (e.g. \cite[Theorem 3]{32}) implies that $q=0$, whence $q_2=q_3=q_1-\d_t\phi$. This completes the proof of Theorem \ref{T1}.\end{proof}

\section{Stable Determination of the Magnetic Potential}\label{section5}
In this section we establish the stability estimate in the recovery of the magnetic potential stated in Theorem \ref{T2}. For $j=1,2$, we assume that $A_j\in W^{6,\infty}((0,T)\times\M;T^\ast\M)\cap H^{3n+4}((0,T)\times\M;T^\ast\M)$ fulfill \eqref{1.333}. Then, for $A=A_1-A_2$ extended by $0$ on $(0,T)\times (\M_1\setminus\M)$ we have $A\in W^{6,\infty}((0,T)\times\M_1;T^\ast\M_1)\cap H^{3n+4}((0,T)\times\M_1;T^\ast\M_1)$. We will also assume for the moment that for some small $\varepsilon>0$ it holds that\begin{equation}\label{Small}\norm{A^{sol}}_{L^2((0,T)\times\M_1)}\leq\varepsilon.\end{equation}

Before proving Theorem \ref{T2}, let us recall some facts about the geodesic ray transform $I_1$.\\
Firstly, according to \cite[Theorem 4.2.1]{28}, the ray transform for $1$-forms extends to a bounded linear operator $I_1:H^k(\M_1;T^\ast\M_1)\rightarrow H^k(\d_+S\M_1)$. Fixing $w(x,\theta)=\abs{\pair{\theta,\nu(x)}_g}$, we can also extend $I_1$ to a bounded linear operator $I_1:L^2(\M_1;T^\ast\M_1)\rightarrow L^2_w(\d_+S\M_1)$, where $L^2_w(\d_+S\M_1)$ is the $L^2$ space with respect to the weighted measure $w(y,\theta)d\theta d\sigma_g(y)$, and thus define $I_1^\ast:L^2_w(\d_+S\M_1)\rightarrow L^2(\M_1;T^\ast\M_1)$ as the adjoint of $I_1$. By condition (\ref{1.3}) we have $A\in H^5((0,T)\times\M_1;T^\ast\M_1)$ with $\textrm{supp}\ A(t,\cdot)\subset\M$ for $t\in(0,T)$. Moreover, according to \cite[Section 8]{32}, the operator $I_1^\ast I_1$,  is an elliptic pseudodifferential operator of order $-1$. Together with condition (\ref{C1}), we have for $0\leq k\leq5$\begin{equation}\label{5.4}\norm{I_1^\ast I_1A}_{H^k((0,T)\times\M_1;T^\ast\M_1)}\leq C\norm{A}_{H^k((0,T)\times\M_1;T^\ast\M_1)}\leq CB.\end{equation}
Also according to \cite[Section 8]{32}, we can find constants $C_1,C_2>0$ such that for $0\leq k\leq5$\begin{equation}\label{5e1}C_1\norm{A^{sol}}_{L^2(0,T;H^k(\M_1))}\leq \norm{I_1^\ast I_1A}_{L^2(0,T;H^{k+1}(\M_1))}\leq C_2\norm{A^{sol}}_{L^2(0,T;H^k(\M_1))}.\end{equation}.
\begin{proof}[Proof of Theorem \ref{T2} subject to (\ref{Small})]
Following the work of the previous section, we allow $h(t,\theta)$ to depend on $y\in\d\M_1$. We can rewrite inequality (\ref{3.2}) in the form
\begin{equation}\begin{split}\label{2}\abs{\int_0^T\int_{\d_+S_y\M_1}\big(e^{iI_1[A(t,\cdot)](y,\theta)}-1\big)\chi^2(t)h(t,y,\theta)d\theta dt}\\\leq C\Big(\norm{\Lambda_{A_1,q_1}-\Lambda_{A_2,q_2}}\lambda^5\tau^{-8}\norm{h(y,\cdot)}_{H^5((0,T)\times \d_+S_y{\M_1})}+\lambda^{-1}\tau^{-6}\norm{h(y,\cdot)}_{H^4((0,T)\times \d_+S_y\M_1)}\Big).\end{split}\end{equation}
We can use the Taylor expansion $e^t=1+t+t^2\int_0^1e^{st}(1-s)ds$ to see that \[e^{iI_1[A(t,\cdot)](y,\theta)}-1=iI_1[A(t,\cdot)](y,\theta)-I_1[A(t,\cdot)]^2(y,\theta)\int_0^1e^{isI_1[A(t,\cdot)](y,\theta)}(1-s)ds,\]and using this identity in (\ref{2}) yields
\[\begin{aligned}&\abs{\int_0^T\chi^2(t)\int_{\d_+S_y\M_1}I_1[A(t,\cdot)](y,\theta)h(t,y,\theta)d\theta dt}\leq C\Big(\norm{\Lambda_{A_1,q_1}-\Lambda_{A_2,q_2}}\lambda^5\tau^{-8}\norm{h(y,\cdot)}_{H^5((0,T)\times \d_+S_y{\M_1})}\\&+\lambda^{-1}\tau^{-6}\norm{h(y,\cdot)}_{H^4((0,T)\times \d_+S_y\M_1)}+\norm{h(y,\cdot)}_{L^2((0,T)\times \d_+S_y\M_1)}\norm{I_1A}^2_{\mathcal{C}^0([0,T]\times\d_+S\M)}\Big).\end{aligned}\]
Combining this with the fact that
\[I_1A=I_1d\phi+I_1A_{sol}=I_1A_{sol}\]
and the definition of $I_1$, we deduce that 
\[\norm{I_1A}_{\mathcal{C}^0([0,T]\times\d_+S\M)}\leq C\norm{A^{sol}}_{\mathcal{C}^0([0,T]\times\M;T^\ast\M)}.\]
This implies that
\begin{equation}\begin{split}\label{3}\abs{\int_0^T\chi^2(t)\int_{\d_+S_y\M_1}I_1[A(t,\cdot)](y,\theta)h(t,y,\theta)d\theta dt}\leq C\Big(\norm{\Lambda_{A_1,q_1}-\Lambda_{A_2,q_2}}\lambda^5\tau^{-8}\norm{h(y,\cdot)}_{H^5((0,T)\times \d_+S_y{\M_1})}\\+\lambda^{-1}\tau^{-6}\norm{h(y,\cdot)}_{H^4((0,T)\times \d_+S_y\M_1)}+\norm{h(y,\cdot)}_{L^2((0,T)\times \d_+S_y\M_1)}\norm{A^{sol}}^2_{\mathcal{C}^0([0,T]\times\M;T^\ast\M)}\Big).\end{split}\end{equation}
Since $I_1$ extends to a bounded linear operator $I_1:H^k(\M_1;T^\ast\M_1)\rightarrow H^k(\d_+S\M_1)$, we can choose $h(t,y,\theta)=I_1I_1^\ast I_1[A(t,\cdot)](y,\theta)$ and then integrate (\ref{3}) with respect to the volume form $d\sigma_g$ of $\d\M_1$. Using the compactness of $\M_1$ we deduce that\begin{equation}\begin{split}\label{a}\int_0^T\chi^2(t)\int_{\M_1}\abs{I_1^\ast I_1[A(t,\cdot)](x)}^2dV_g(x)dt=\int_0^T\chi^2(t)\int_{\d_+S\M_1}I_1[A(t,\cdot)](y,\theta)h(t,\theta)\abs{\pair{\theta,\nu(y)}_g}d\theta d\sigma_g(y)dt\\\leq C\Big(\norm{\Lambda_{A_1,q_1}-\Lambda_{A_2,q_2}}\lambda^5\tau^{-8}\norm{I_1^\ast I_1A}_{H^5((0,T)\times\M_1;T^\ast\M_1)}+\lambda^{-1}\tau^{-6}\norm{I_1^\ast I_1A}_{H^4((0,T)\times\M_1;T^\ast\M_1)}\\+\norm{I_1^\ast I_1A^{sol}}_{L^2((0,T)\times\M_1;T^\ast\M_1)}\norm{A^{sol}}^2_{\mathcal{C}^0([0,T]\times\M;T^\ast\M)}\Big).\end{split}\end{equation}
Moreover, using (\ref{5.4}) we can further simplify (\ref{a}) in order to obtain\begin{equation}\begin{split}\label{4}\int_0^T\chi^2(t)\int_{\M_1}\abs{I_1^\ast I_1[A(t,\cdot)](x)}^2dV_g(x)dt\\\leq C\Big(\norm{\Lambda_{A_1,q_1}-\Lambda_{A_2,q_2}}\lambda^5\tau^{-8}+\lambda^{-1}\tau^{-6}+\norm{A^{sol}}_{L^2((0,T)\times\M_1;T^\ast\M_1)}\norm{A^{sol}}^2_{\mathcal{C}^0([0,T]\times\M;T^\ast\M)}\Big).\end{split}\end{equation}
Since we also have\begin{equation}\begin{split}\label{b}\abs{\int_0^T\chi^2(t)\int_{\M_1}\abs{I_1^\ast I_1[A(t,\cdot)](x)}^2dV_g(x)dt-\int_0^T\int_{\M_1}\abs{I_1^\ast I_1[A(t,\cdot)](x)}^2dV_g(x)dt}\\\leq C\Big[\int_0^\tau(1-\chi^2(t))dt+\int_{T-\tau}^T(1-\chi^2(t))dt\Big]\leq C\tau,\end{split}\end{equation} we obtain the estimate\begin{equation}\begin{split}\label{edit1}\int_0^T\int_{\M_1}\abs{I_1^\ast I_1[A(t,\cdot)](x)}^2dV_g(x)dt\\\leq C\Big(\norm{\Lambda_{A_1,q_1}-\Lambda_{A_2,q_2}}\lambda^5\tau^{-8}+\lambda^{-1}\tau^{-6}+\tau+\norm{A^{sol}}_{L^2((0,T)\times\M_1)}\norm{A^{sol}}^2_{\mathcal{C}^0([0,T]\times\M_1)}\Big).\end{split}\end{equation}
We now set $\gamma_\ast=\min\big((\frac{T}{4})^{44},1\big)$. Let $\gamma=\norm{\Lambda_{A_1,q_1}-\Lambda_{A_2,q_2}}$. For $\gamma<\gamma_\ast$, we can choose $\tau=\gamma^\frac{1}{44},\ \lambda=\tau^{-7}$, and deduce that\begin{equation}\label{5}\norm{I_1^\ast I_1A}_{L^2((0,T)\times\M_1)}^2\leq C\Big(\gamma^{\frac{1}{44}}+\norm{A^{sol}}_{L^2((0,T)\times\M_1)}\norm{A^{sol}}^2_{\mathcal{C}^0([0,T]\times\M_1)}\Big).\end{equation}
By the Sobolev embedding theorem, interpolation, and condition (\ref{C1}), we observe that\begin{equation}\begin{split}\label{6}\norm{A^{sol}}_{\mathcal{C}^0([0,T]\times\M_1)}\leq C\norm{A^{sol}}_{H^{\frac{n+1}{2}+\frac{1}{6}}((0,T)\times\M_1)}\\\leq C\norm{A^{sol}}^{\frac{5}{6}}_{L^2((0,T)\times\M_1)}\norm{A^{sol}}^{\frac{1}{6}}_{H^{3n+4}((0,T)\times\M_1)}\leq C\norm{A^{sol}}^{\frac{5}{6}}_{L^2((0,T)\times\M_1)}.\end{split}\end{equation}
Then, using (\ref{5e1}) and condition (\ref{C1}), interpolation also yields the estimate\begin{equation}\begin{split}\label{7}\norm{A^{sol}}_{L^2((0,T)\times\M_1)}^2&\leq C\norm{I_1^\ast I_1A}^2_{L^2(0,T;H^1(\M_1))}\leq C\norm{I_1^\ast I_1A}^{\frac{10}{6}}_{L^2((0,T)\times\M_1)}\norm{I_1^\ast I_1A}^{\frac{1}{3}}_{L^2(0,T;H^6(\M_1))}\\&\leq C\norm{I_1^\ast I_1A}^{\frac{10}{6}}_{L^2((0,T)\times\M_1)}\norm{A^{sol}}^{\frac{1}{3}}_{L^2(0,T;H^5(\M_1))}\leq C\norm{I_1^\ast I_1A}^{\frac{10}{6}}_{L^2((0,T)\times\M_1)}.\end{split}\end{equation}
Finally we combine (\ref{5}), (\ref{6}) and (\ref{7}) to obtain\[\begin{split}\norm{A^{sol}}_{L^2((0,T)\times\M_1)}^2\leq C\norm{I_1^\ast I_1A}_{L^2}^{\frac{10}{6}}\leq C\gamma^{\frac{5}{264}}+C\norm{A^{sol}}^{\frac{80}{36}}_{L^2}\leq C\gamma^{\frac{5}{264}}+C\varepsilon^{\frac{8}{36}}\norm{A^{sol}}^2_{L^2((0,T)\times\M_1)}.\end{split}\]
Thus for small $\varepsilon$ we deduce that\[\norm{A^{sol}}_{L^2((0,T)\times\M)}\leq C\gamma^{\frac{5}{528}}.\]
Similarly for $\gamma\geq\gamma_\ast$, we have\begin{equation}\label{5e2}\norm{A^{sol}}_{L^2((0,T)\times\M)}\leq \frac{\norm{A^{sol}}_{L^2((0,T)\times\M)}\gamma^{\frac{5}{528}}}{\gamma_\ast^{\frac{5}{528}}}\leq C\gamma^{\frac{5}{528}}.\end{equation}
Thus the proof of Theorem \ref{T2} is complete, subject to the smallness assumption (\ref{Small}).\end{proof}
We will now show that the assumption that (\ref{Small}) holds a priori is unnecessary. Define $\eta\in\mathcal{C}^\infty(\R^{n})$ by \[\eta(x)=\begin{cases}C\exp(\frac{1}{\abs{x}^2-1})&\textrm{ if }\abs{x}<1,\\0&\textrm{ if }\abs{x}\geq1,\end{cases}\]where $C>0$ is chosen so that $\int_{\R^{n}}\eta(x)dx=1$. We further define the function\[\eta_\rho(x)=\frac{1}{\rho^{n}}\eta\Big(\frac{x}{\rho}\Big).\]
Note that $\eta_\rho$ approximates the Dirac delta distribution on $\R^n$ as $\rho\rightarrow0$. Arguing as we did in (\ref{b}), we use the estimate (\ref{2}) to deduce that\begin{equation}\begin{split}\label{8}\abs{\int_0^T\int_{\d_+S_y\M_1}\big(e^{iI_1[A(t,\cdot)](y,\theta)}-1\big)h(t,y,\theta)d\theta dt}\\\leq C\Big(\norm{\Lambda_{A_1,q_1}-\Lambda_{A_2,q_2}}\lambda^5\tau^{-8}\norm{h(y,\cdot)}_{H^5((0,T)\times \d_+S_y{\M_1})}+\lambda^{-1}\tau^{-6}\norm{h(y,\cdot)}_{H^4((0,T)\times \d_+S_y\M_1)}+\tau\Big).\end{split}\end{equation}
Since $A$ is extended by $0$ to $(0,T)\times(\M_1\setminus\M)$, it follows that $e^{iI_1[A(t,\cdot)](y,\theta)}-1$ is compactly supported in $[0,T]\times \d_+S_y\M_1$. We can find a finite open cover $\{U_i\}_{i=1}^N$ of $\d\M_1$ so that for all $y\in U_i$ we can choose the same spherical coordinates $\theta:=\R^{n-1}\ni\alpha\mapsto\theta(\alpha)$ on $S_y\M_1$ in such a way that $\theta(\alpha)$ gives coordinates in a neighborhood of $\textrm{supp}(e^{iI_1[A(t,\cdot)](y,\theta)}-1)\subset \d_+S_y\M_1$.\\

We can then fix $y\in\d\M_1$, $\theta_0\in \d_+S_y\M_1$, $t_0\in(0,T)$. Let $\alpha_0=\alpha(\theta_0)$, $\gamma=\norm{\Lambda_{A_1,q_1}-\Lambda_{A_2,q_2}}$. We define the function $f(\alpha,t)=e^{iI_1[A(t,\cdot)](y,\theta(\alpha))}-1$ and let $h(t,y,\theta)$ approximate the cylindrical Dirac delta distribution, that is\[h(t,y,\theta(\alpha))=\frac{1}{sin^{n-2}(\alpha_1)sin^{n-3}(\alpha_2)\cdots sin(\alpha_{n-2})}\eta_\rho\big((\alpha_0,t_0)-(\alpha,t)\big).\]
It is well known (see for instance \cite[Lemma 2.1]{Sa1}) that
\[\norm{h}_{H^k((0,T)\times \d_+S_y\M_1)}\leq \rho^{-(n+k)},\quad k\in\N.\]
In addition, we fix
\[f^\rho(\alpha_0,t_0)=\int_{\R^n}f(\alpha,t)h\big[\big(t_0,y,\theta(\alpha_0)\big)-\big(t,y,\theta(\alpha)\big)\big]dtd\alpha.\]
We use (\ref{8}) to deduce that\begin{equation}\label{9}\abs{\int_{\R^{n}}f(\alpha,t)\eta_\rho\big((\alpha_0,t_0)-(\alpha,t)\big)dtd\alpha}\leq C\Big(\gamma\lambda^5\tau^{-8}\rho^{-n-5}+\lambda^{-1}\tau^{-6}\rho^{-n-4}+\tau\Big).\end{equation}
In particular, $C$ is a positive constant depending only on $\M$, $T$ and $B$, and independent of $y$. In order to deal with the left hand side above, we need the following Lemma:

\begin{lemma}\label{l2}
Let $f:\R^{n}\mapsto\R$ be $\mathcal{C}^1$, and let $f^\rho(x_0)=\int_{B(x_0,\rho)}f(x)\eta_\rho(x_0-x)dx$. Then for any $x_0\in\R^{n}$ we have that\[\abs{f^\rho(x_0)-f(x_0)}\leq C\norm{f}_{\mathcal{C}^1}\rho.\]
\end{lemma}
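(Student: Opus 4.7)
The plan is to exploit the standard mollifier convergence estimate: since $\eta_\rho$ is a smooth approximation of the Dirac distribution with unit mass, $f^\rho(x_0)$ is close to $f(x_0)$ whenever $f$ has enough regularity to control its oscillations on the ball of radius $\rho$.

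First I would note that by a change of variables $\int_{\R^n}\eta_\rho(x_0-x)\,dx=\int_{\R^n}\eta_\rho(y)\,dy=\int_{\R^n}\eta(z)\,dz=1$, since $\eta$ was normalized to integrate to one. This lets me rewrite
\[f^\rho(x_0)-f(x_0)=\int_{B(x_0,\rho)}\bigl(f(x)-f(x_0)\bigr)\eta_\rho(x_0-x)\,dx,\]
because $\eta_\rho(x_0-\cdot)$ is supported in $B(x_0,\rho)$.

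Next I would apply the mean value inequality: since $f\in\mathcal C^1$, for any $x\in B(x_0,\rho)$ we have $|f(x)-f(x_0)|\leq\norm{\nabla f}_{L^\infty}|x-x_0|\leq\norm{f}_{\mathcal C^1}\rho$. Inserting this bound into the integrand and using the nonnegativity of $\eta_\rho$ together with the unit mass property yields
\[|f^\rho(x_0)-f(x_0)|\leq\norm{f}_{\mathcal C^1}\rho\int_{B(x_0,\rho)}\eta_\rho(x_0-x)\,dx=\norm{f}_{\mathcal C^1}\rho,\]
which is the desired estimate (with $C=1$, or a harmless larger absolute constant if one measures $\norm{f}_{\mathcal C^1}$ with a different convention).

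There is no real obstacle here; the argument is routine and relies only on the normalization of $\eta$ and the Lipschitz control furnished by the $\mathcal C^1$ norm. The only point that deserves a moment's care is ensuring the constant $C$ is truly independent of $x_0$ and $\rho$, which it is, since the estimate is pointwise in $x_0$ and the scaling of $\eta_\rho$ preserves total mass.
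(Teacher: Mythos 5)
Your argument coincides with the paper's: both rewrite $f^\rho(x_0)-f(x_0)$ as $\int_{B(x_0,\rho)}\eta_\rho(x_0-x)\bigl(f(x)-f(x_0)\bigr)\,dx$ using the unit mass of $\eta_\rho$, then apply the $\mathcal{C}^1$ Lipschitz bound $\abs{f(x)-f(x_0)}\leq\norm{f}_{\mathcal{C}^1}\rho$ on the ball and integrate. The proof is correct and takes essentially the same route as the paper.
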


\begin{proof}
\[\begin{split}\abs{f^\rho(x_0)-f(x_0)}=\abs{\int_{B(x_0,\rho)}\eta_\rho(x_0-x)[f(x)-f(x_0)]dx}\leq\int_{B(x_0,\rho)}\eta_\rho(x_0-x)\abs{f(x)-f(x_0)}dx\\\leq\int_{B(x_0,\rho)}\eta_\rho(x_0-x)\norm{f}_{\mathcal{C}^1}\rho dx\leq C\norm{f}_{\mathcal{C}^1}\cdot\rho.\end{split}\]
\end{proof}
Since $I_1:\mathcal{C}^k(\M_1;T^\ast\M_1)\mapsto\mathcal{C}^k(\d S\M_1)$ is bounded, $\norm{A}_{W^{5,\infty}((0,T)\times\M_1;T^\ast\M_1)}\leq B$, then we must have $\norm{f}_{\mathcal{C}^1}\leq CB$ when $f(\alpha,t)=e^{iI_1[A(t,\cdot)](y,\theta(\alpha))}-1$. Thus, Lemma (\ref{l2}) together with (\ref{9}) tells us that\[\abs{e^{iI_1[A(t_0,\cdot)](y,\theta_0)}-1}\leq C\Big(\gamma\lambda^5\tau^{-8}\rho^{-n-5}+\lambda^{-1}\tau^{-6}\rho^{-n-4}+\tau+\rho\Big).\]
For $\gamma\leq\min\big((\frac{T}{4})^{6n+69},1\big)$ we can choose $\tau=\gamma^{\frac{1}{6n+69}}$, $\lambda=\tau^{-n-11}$, $\rho=\tau$ to deduce that\[\abs{e^{iI_1[A(t_0,\cdot)](y,\theta_0)}-1}\leq C\gamma^{\frac{1}{6n+69}},\]with $C$ independent of $y$. We now choose $\gamma_0$ small enough so the right hand side is near $0$ when $\gamma<\gamma_0$. But this implies that $I_1[A(t_0,\cdot)](y,\theta_0)$ remains close to integer multiples of $2\pi$ whenever $\gamma<\gamma_0$. Recall that $A$ is extended to $(0,T)\times\M_1\setminus\M$ by zero. Thus, for choices of $y,\theta_0$ corresponding to short geodesics remaining close to the boundary of $\M_1$, we have $I_1[A(t_0,\cdot)](y,\theta_0)=0$. Then, the continuity of $I_1[A(t_0,\cdot)]$ in $y,\theta_0$, together with the previous argument implies $I_1[A(t_0,\cdot)](y,\theta_0)$ is close to zero when $\gamma<\gamma_0$. But $\norm{I_1A}_{\mathcal{C}^0([0,T]\times\d_+S\M_1)}\leq\varepsilon^2$ implies $\norm{I_1A}_{L^2(0,T)\times\d_+S\M_1}\leq C\varepsilon^2$, and in turn $\norm{I_1^\ast I_1A}_{L^2((0,T)\times\M_1;T^\ast\M_1)}\leq C\varepsilon^2$.\\
Then interpolation gives\[\begin{split}\norm{A^{sol}}_{L^2((0,T)\times\M_1)}\leq C\norm{I_1^\ast I_1A}_{L^2(0,T;H^1(\M_1))}\leq C\norm{I_1^\ast I_1A}_{L^2((0,T)\times\M_1)}^{\frac{1}{2}}\norm{I_1^\ast I_1A}_{L^2(0,T;H^2(\M_1))}^{\frac{1}{2}}\\\leq C\norm{I_1^\ast I_1A}_{L^2((0,T)\times\M_1)}^{\frac{1}{2}}\norm{A^{sol}}_{L^2(0,T;H^1(\M_1))}^{\frac{1}{2}}\leq C\norm{I_1^\ast I_1A}_{L^2((0,T)\times\M_1)}^{\frac{1}{2}}\leq C\varepsilon.\end{split}\]
Thus, for $\gamma<\gamma_0$ we conclude that the smallness assumption $\norm{A^{sol}}_{L^2((0,T)\times\M_1)}\leq\varepsilon$ holds. Therefore, we can rerun the argument of the previous section with $\gamma_\ast$ replaced by $\gamma_0$, and reach the same conclusion without the need to assume smallness a priori. On the other hand, if $\gamma\geq\gamma_0$, we proceed as in (\ref{5e2}). With this, the proof of Theorem \ref{T2} is now complete.

\section{Stable Recovery of the Electric Potential}\label{section6}
This section is devoted to proving the stability estimate in the recovery of the electric potential stated in Theorem \ref{T3}. Henceforth, for $j=1,2$ we assume that $A_j\in W^{5,\infty}((0,T)\times\M_1;T^\ast\M_1)$ with $\delta A_1=\delta A_2$ (so that $A=A^{sol}$), $q_j\in W^{4,\infty}((0,T)\times\M_1)$ and that conditions (\ref{1.4}) and (\ref{1.5}) are fulfilled. Additionally, we continue to assume that condition (\ref{C1}) holds true for the magnetic potential. In light of (\ref{2.16})-(\ref{2.21}), we can use (\ref{3.3})-(\ref{3.4}) to deduce that\begin{equation}\begin{split}\label{6.1}\abs{\int_0^T\int_\M Vu_1\overline{u_2}dV_g(x)dt}\leq C\Big(\lambda\tau^{-6}\norm{A}_{L^\infty((0,T)\times\M;T^\ast\M)}\norm{h}_{H^4((0,T)\times\d_+S_y\M)}\\+\gamma\tau^{-8}\lambda^{6}\norm{h}_{H^5((0,T)\times\d_+S_y\M_1)}\Big),\end{split}\end{equation}where again $\gamma$ denotes $\norm{\Lambda_{A_1,q_1}-\Lambda_{A_2,q_2}}$. Using the fact that\[\begin{split}\int_0^T\int_\M Vu_1\overline{u_2}\ dV_g(x)dt=\int_0^T\int_\M Va_1\overline{a_2}\ dV_g(x)dt+\int_0^T\int_\M Va_1\Big(\frac{\overline{b_2}}{\lambda}+e^{i\lambda(\psi-\lambda t)}\overline{R_{2,\lambda}}\Big)\ dV_g(x)dt\\+\int_0^T\int_\M V\Big(\frac{b_1}{\lambda}+e^{-i\lambda(\psi-\lambda t)}R_{1,\lambda}\Big)\overline{a_2}\ dV_g(x)dt\\+\int_0^T\int_\M V\Big(\frac{b_1}{\lambda}+e^{-i\lambda(\psi-\lambda t)}R_{1,\lambda}\Big)\Big(\frac{\overline{b_2}}{\lambda}+e^{i\lambda(\psi-\lambda t)}\overline{R_{2,\lambda}}\Big)\ dV_g(x)dt\end{split},\]together with (\ref{6.1}) and (\ref{2.16})-(\ref{2.21}), we obtain\begin{equation}\begin{split}\label{6.2}\int_0^T\int_\M Va_1\overline{a_2}\ dV_g(x)dt\leq& C\Big(\lambda\tau^{-6}\norm{A}_{L^\infty((0,T)\times\M;T^\ast\M)}\norm{h}_{H^4((0,T)\times\d_+S_y\M)}\\&+\gamma\tau^{-8}\lambda^{6}\norm{h}_{H^5((0,T)\times\d_+S_y\M_1)}+\lambda^{-1}\tau^{-4}\norm{h}_{H^4((0,T)\times\d_+S_y\M_1)}\Big).\end{split}\end{equation}
Then, by the definition of $V$ together with Stokes' theorem, we deduce\[\int_0^T\int_\M Va_1\overline{a_2}\ dV_g(x)dt=\int_0^T\int_\M qa_1\overline{a_2}\ dV_g(x)dt-i\int_0^T\int_\M A\nabla_g(a_1\overline{a_2})\ dV_g(x)dt-\int_0^T\int_M\pair{A,A_1+A_2}_ga_1\overline{a_2}\ dV_g(x)dt,\]whence we have\begin{equation}\begin{split}\label{6.3}\int_0^T\int_\M qa_1\overline{a_2}\ dV_g(x)dt\leq& C\Big(\lambda\tau^{-6}\norm{A}_{L^\infty((0,T)\times\M;T^\ast\M)}\norm{h}_{H^4((0,T)\times\d_+S_y\M)}\\&+\gamma\tau^{-8}\lambda^{6}\norm{h}_{H^5((0,T)\times\d_+S_y\M_1)}+\lambda^{-1}\tau^{-4}\norm{h}_{H^4((0,T)\times\d_+S_y\M_1)}\Big).\end{split}\end{equation}
Since it holds that\[\int_0^T\int_\M qa_1\overline{a_2}\ dV_g(x)dt=\int_0^T\int_{\d_+S_y\M_1}\int_0^\infty q(t,r,\theta)\chi^2(t)h(t,\theta)\exp\Big(i\int_0^\infty A(t,r+s,\theta)\theta ds\Big)drd\theta dt\]we deduce\[\begin{split}\abs{\int_0^T\int_{\d_+S_y\M_1}\int_0^\infty\chi^2(t)q(t,r,\theta)h(t,\theta)drd\theta dt}\leq\abs{\int_0^T\int_\M qa_1\overline{a_2}dV_g(x)dt}\\+\abs{\int_0^T\int_{\d_+S_y\M_1}\int_0^\infty\chi^2(t)q(t,r,\theta)h(t,\theta)\Big[\exp\Big(i\int_0^\infty A(t,r+s,\theta)\theta ds\Big)-1\Big]drd\theta dt}.\end{split}\]
Applying the mean value theorem to the second term on the right, we find that\[\abs{\int_0^T\int_\M I_0[q(t,\cdot)](y,\theta)\chi^2(t)h(t,y,\theta)d\theta dt}\leq\abs{\int_0^T\int_\M qa_1\overline{a_2}dV_g(x)dt}+C\norm{A}_{L^\infty((0,T)\times\M_1;T^\ast\M_1)},\]and, by combining the above with (\ref{6.3}), we deduce that\begin{equation}\begin{split}\label{6.4}\abs{\int_0^T\int_{\d_+S_y\M_1}I_0[q(t,\cdot)](y,\theta)\chi^2(t)h(t,y,\theta)}d\theta dt\leq&C\Big(\lambda\tau^{-6}\norm{A}_{L^\infty((0,T)\times\M;T^\ast\M)}\norm{h}_{H^4((0,T)\times\d_+S_y\M)}\\&+\gamma\tau^{-8}\lambda^{6}\norm{h}_{H^5((0,T)\times\d_+S_y\M_1)}+\lambda^{-1}\tau^{-4}\norm{h}_{H^4((0,T)\times\d_+S_y\M_1)}\Big).\end{split}\end{equation}
By the Sobolev interpolation theorem, we can choose $p\in(n+1,\infty)$ such that $\norm{A}_{L^\infty((0,T)\times\M_1;T^\ast\M_1)}\leq C\norm{A}_{W^{1,p}((0,T)\times\M_1;T^\ast\M_1)}$, and by interpolation together with condition (\ref{C1}) we deduce that\[\begin{split}\norm{A}_{L^\infty((0,T)\times\M_1;T^\ast\M_1)}\leq C\norm{A}_{W^{2,p}((0,T)\times\M_1;T^\ast\M_1)}^{\frac{1}{2}}\norm{A}_{L^p((0,T)\times\M_1;T^\ast\M_1)}^{\frac{1}{2}}\\\leq \norm{A}_{L^p((0,T)\times\M_1;T^\ast\M_1)}^{\frac{1}{2}}\leq C\norm{A}_{L^2((0,T)\times\M_1;T^\ast\M_1)}^{\frac{1}{p}}.\end{split}\]
By combining this estimate with the result Theorem \ref{T2}, we conclude that\[\norm{A}_{L^\infty((0,T)\times\M_1;T^\ast\M_1)}\leq C\gamma^{\frac{s_1}{p}}.\]
Thus, we can rewrite (\ref{6.4}) as\begin{equation}\begin{split}\label{6.5}\abs{\int_0^T\int_{\d_+S_y\M_1}I_0[q(t,\cdot)](y,\theta)\chi^2(t)h(t,y,\theta)}d\theta dt\leq C\Big(\lambda\tau^{-6}\gamma^{\frac{s_1}{p}}\norm{h}_{H^4((0,T)\times\d_+S_y\M)}\\+\gamma\tau^{-8}\lambda^{6}\norm{h}_{H^5((0,T)\times\d_+S_y\M_1)}+\lambda^{-1}\tau^{-4}\norm{h}_{H^4((0,T)\times\d_+S_y\M_1)}\Big).\end{split}\end{equation}

\begin{proof}[Proof of Theorem \ref{T3}]
In order to prove (\ref{1.6}) we will use the estimate (\ref{6.5}) together with a suitable choice of $h$. First, note that according to condition (\ref{1.4}) we have $q\in H^5((0,T)\times\M_1)$ with $\textrm{supp}\ q(t,\cdot)\subset\M$ when $t\in(0,T)$. Recall, according to \cite[Section 7]{32}, that $I_0^\ast I_0$ with $I_0^\ast$ the adjoint of $I_0$ (see for instance [2, Subsection 2.2] for details) is an elliptic pseudodifferential operator of order $-1$ for $\xi\in T^\ast\M$. Therefore, for all $t\in(0,T)$, we have $\norm{I_0^\ast I_0[q(t,\cdot)]}\in H^5((0,T)\times\M_1)$ and condition (\ref{1.5}) implies\begin{equation}\label{4.4}\norm{I_0^\ast I_0q}_{H^5((0,T)\times\M_1)}\leq C\norm{q}_{H^5((0,T)\times\M_1)}\leq CB_1.\end{equation}
Moreover, according to \cite[Theorem 4.2.1]{28}, for all $k\in\N$, the operator $I_0:H^k(\M_1)\rightarrow H^k(\d_+S\M_1)$ is bounded. Thus, we can choose $h(t,\cdot)=I_0I_0^\ast I_0[q(t,\cdot)]\in H^5((0,T)\times\d_+S\M_1)$. Integrating the left hand side of (\ref{6.5}) with respect to $y\in\d\M_1$ and applying Fubini's theorem yields\[\int_0^T\chi^2(t)\int_{\d_+S\M_1}I_0[q(t,\cdot)](y,\theta)h(t,y,\theta)\abs{\pair{\theta,\nu(y)}_g}d\theta d\sigma_g(y)dt=\int_0^T\chi^2(t)\int_{\M_1}\abs{I_0^\ast I_0[q(t,\cdot)](x)}^2dV_g(x)dt.\]
Combining this with (\ref{6.5}) and (\ref{4.4}), and using the fact that $\M_1$ is compact, we get\begin{equation}\label{4.5}\int_0^T\chi^2(t)\int_{\M_1}\abs{I_0^\ast I_0[q(t,\cdot)](x)}^2dV_g(x)dt\leq C\Big(\gamma^{\frac{s_1}{p}}\lambda\tau^{-6}+\gamma\tau^{-8}\lambda^{-6}+\tau^{-4}\lambda^{-1}\Big),\end{equation}with $C$ depending only on $\M_1$, $T$ and $B_1$. Further, by the same argument as in (\ref{b}), the estimate (\ref{4.5}) can be rewritten as\begin{equation}\begin{split}\label{4.6}\int_0^T\int_{M_1}\abs{I_0^\ast I_0[q(t,\cdot)](x)}^2dV_g(x)\leq C\Big[\gamma^{\frac{s_1}{p}}\lambda\tau^{-6}+\gamma\tau^{-8}\lambda^{6}+\tau^{-4}\lambda^{-1}+\tau\Big].\end{split}\end{equation}
Note that for all $t\in(0,T)$ we have $\textrm{supp}\ q(t,\cdot)\subset\M$. Thus, according to \cite[Theorem 3]{32}, we have\[\int_{\M_1}\abs{q(t,x)}^2dV_g(x)\leq C\norm{I_0^\ast I_0[q(t,\cdot)]}_{H^1(\M_1)}^2,\quad t\in(0,T).\]
Integrating with respect to $t\in(0,T)$ yields\[\int_0^T\int_{\M_1}\abs{q(t,x)}^2dV_g(x)\leq C\norm{I_0^\ast I_0[q(t,\cdot)]}_{L^2(0,T;H^1(\M_1))}^2.\]
Then, by interpolation we obtain\[\begin{split}\int_0^T\int_{\M_1}\abs{q(t,x)}^2dV_g(x)\leq& C\norm{I_0^\ast I_0[q(t,\cdot)]}_{L^2((0,T)\times\M_1)}\norm{I_0^\ast I_0[q(t,\cdot)]}_{L^2(0,T;H^2(\M_1))}\\\leq& C\norm{I_0^\ast I_0[q(t,\cdot)]}_{L^2((0,T)\times\M_1)},\end{split}\]where $C$ depends on $\M$, $T$ and $B_1$. Combining this with estimate (\ref{4.6}), we find that\begin{equation}\label{4.7}\int_0^T\int_{\M_1}\abs{q(t,x)}^2dV_g(x)\leq C\Big[\gamma^{\frac{s_1}{p}}\lambda\tau^{-6}+\gamma\tau^{-8}\lambda^{6}+\tau^{-4}\lambda^{-1}+\tau\Big]\end{equation}and (\ref{1.6}) follows from (\ref{4.7}) by a similar argument to the one used to prove Theorem \ref{T2} from (\ref{edit1}).
\end{proof}

\section*{Acknowledgments}
 The work of YK was partially supported by the Agence Nationale de la Recherche under grant ANR-17-CE40-0029. 
AT was supported by EPSRC DTP studentship EP/N509577/1.
\begin{small}

\end{small}

\end{document}